\documentclass[11pt]{amsart}

\usepackage[a4paper,margin=1in]{geometry}
\usepackage{amsmath,amssymb,amsthm,mathtools,bm}
\usepackage{stmaryrd}
\usepackage{microtype}
\usepackage{enumitem}
\usepackage{xcolor}
\usepackage{graphicx}
\usepackage{comment}
\newtheorem{lemma}{Lemma}[section]
\newtheorem{assume}{Assumption}[section]
\newtheorem{theorem}[lemma]{Theorem}
\theoremstyle{proposition}
\newtheorem{proposition}{Proposition}[section]
\newtheorem{corollary}[lemma]{Corollary}
\theoremstyle{remark}
\newtheorem{remark}[lemma]{Remark}

\newcommand{\R}{\mathbb{R}}
\newcommand{\Sd}{\mathbb{S}^{d}}

\newcommand{\Fh}{\mathcal{F}_h}
\newcommand{\Sh}{\Sigma_h}
\newcommand{\Uh}{U_h}

\newcommand{\diver}{\operatorname{div}}
\newcommand{\jump}[1]{\llbracket #1\rrbracket}
\newcommand{\norm}[1]{\lVert #1\rVert}
\newcommand{\absj}[1]{\lvert #1\rvert_J}

\newcommand{\ip}[2]{\bigl(#1,#2\bigr)}

\title{Symmetric Taylor--Hood elements for the linear stress gradient problem}
\date{}
\author {Jun Hu}
\address{School of Mathematical Sciences, Peking University, Beijing 100871, P.R. China.}
\address{ Chongqing Research Institute of Big Data, Chongqing 401333, People’s Republic of China}
\email{  hujun@math.pku.edu.cn }
\author {Ting Lin}
\address{School of Mathematical Sciences, Peking University, Beijing 100871, P.R. China.}
\email{ lintingsms@pku.edu.cn }

\author {Shudan Tian}
\address{School of Mathematics and Computational Science, Xiangtan University, Xiangtan 411105, P. R. China. }
\email{ shudan.tian@xtu.edu.cn }
\author{Min Zhang}
\address{Department of Mathematics, College of Science,
Beijing Forestry University, Beijing 100083, P.R. China.}
\email{zhangminD01@bjfu.edu.cn}
\begin{document}
\begin{abstract}
We develop mixed finite element methods for the linear stress-gradient 
elasticity problem based on symmetric Taylor--Hood elements. We establish 
the stability of the symmetric Taylor--Hood pair on simplicial meshes in 
both two and three dimensions, thereby resolving the stability  
left open by Brezzi, Fortin, and Marini in 1993. Combing the Nitsche's 
method, we further construct finite element schemes for linear stress problem in general boundary condition. Numerical experiments confirm the theoretical 
results.
\end{abstract}
\maketitle

\section{Introduction}

\qquad In this paper, we focus on the linear stress gradient model. Given a bounded domain $\Omega\subset \mathbb
 R^d,~d=2,3$, the linear stress gradient elasticity equation is 
\begin{equation}
	\begin{cases}
\epsilon(u)= \mathcal{A} ( \sigma-\iota^2\triangle\sigma ) & \text{in }\Omega,  \\
\diver \sigma =  f & \text{in }\Omega,  \\
 \sigma  n =  g_f~&\text{on }\Gamma_f,\\
 u =  g_c~&\text{on } \Gamma_c ,\\
\partial_n\sigma = g_{\sigma}&\text{on }\partial\Omega,
	\end{cases}
	\label{lsg-strong}
\end{equation}where $\sigma$, $\epsilon(u) =\frac{1}{2}(\nabla u + \nabla u^T)$, and $u$ represent the stress (symmetric matrix-valued), strain (symmetric matrix-valued), and displacement (vector-valued), respectively. $\mathcal A$ is an isotropic fourth-order tensor, defined as 
$$
 \mathcal{A}\sigma = \frac{1}{2\mu}\sigma-\frac{\lambda}{2\mu(2\mu+d\lambda)}\mathrm{tr}(\sigma)I.
$$ Clearly, $\mathcal A$ models the relationship between the stress and the modified strain.  In \eqref{lsg-strong}, \(\mu>0\), \(\lambda\geq 0\) , and
\(0<\iota\leq 1\). For $\iota=0$, \eqref{lsg-strong}  does not reduce exactly to the standard linear elasticity problem, since it contains one additional boundary condition $\partial_n\sigma = g_{\sigma}.$   
 $\Gamma_c$ and $\Gamma_f $ are a partition of $\partial \Omega$. Define
\[\Sigma_f := \{\sigma\in H^1(\Omega; \mathbb S^d)|~\sigma n=g_f|_{\Gamma_f}\} \text{ and } Q := L^2(\Omega; \mathbb R^d).\] 
Here $\mathbb S^d$ is symmetric $d$-dimensional matrix and $\mathbb R^d$ is a $d$-dimensional vector. When $\Gamma_f=\emptyset,~\Sigma_f$ becomes $H^1(\Omega;\mathbb S^d)$.  
The variational formulation of the linear stress gradient model \cite{VariationLSG} can be regarded as a perturbed Hellinger-Reissner elasticity variation.  The variational framework seeks a solution $ \sigma \in \Sigma_f,~ u \in Q$ such that
\begin{equation}
	\begin{cases}
	&\iota^2(\nabla  \mathcal{A} \sigma,\nabla  \tau)+( \mathcal{A}\sigma, \tau) + (\diver \tau, u) = F(g_c,g_{\sigma};\tau),~\forall  \tau \in \Sigma_0,\\
	&(\diver \sigma, v) = ( f, v),~\forall v\in Q.
	\end{cases}
\label{lsg-weak}
\end{equation} 
 $(\sigma,\tau):=\int_{\Omega}\sigma:\tau\mathrm{d} x$ is an inner product on $\Sigma_f.$ The space $\Sigma_0:=\{\sigma\in H^1(\Omega;\mathbb S^d)|~\sigma n =0|_{\Gamma_f}\}.$  The right-hand side is
\[
\begin{aligned}
F(g_c,g_\sigma;\tau)
:=
\langle g_c,\tau n\rangle_{\Gamma_c}
+
\iota^2
\langle \mathcal{A}^{-1}g_\sigma,\tau\rangle_{\partial \Omega}.
\end{aligned}
\] Comparing with standard Hellinger-Reissner principle, approximation problem \eqref{lsg-weak} is more complicated. Because the stress space require full $H^1$-continuity, and there no tangential component can be relax as bubble functions. 
 \subsection{Gradient elasticity model}
 Gradient elasticity model is widely used in nano materials. For such materials, classical linear elasticity cannot  capture size effects because it lacks an intrinsic length-scale parameter \cite{Aifantis2009,Aifantis2011overview}. Therefore, Aifantis et al. improved the strain-gradient model introduced by Mindlin in 1965 \cite{MINDLIN1964,MINDLIN1965} and proposed a simplified strain-gradient model involving only one additional length-scale parameter \cite{Aifantis1986,Aifantis1999}. This model leads to a fourth-order singularly perturbed problem, for which finite element methods have been developed recently \cite{Huang2025LSG,Ming2017, MingH2korn, MingMixLSG, HuangLSGSIAM}. 

Strain-gradient elasticity is one class of gradient elasticity and is complementary to stress-gradient elasticity \cite{VariationLSG}. By incorporating higher-order stress or strain terms into the constitutive relation, gradient-elasticity models can describe the size-dependent deformation of small-scale materials \cite{Aifantis2009,Aifantis2011overview, LamEtAl2003}. This paper is mainly concerned with the stress-gradient model, which was proposed by Eringen in 1983 \cite{Eringen1983}. A variational formulation of this model was presented in 2015 \cite{VariationLSG}.

\subsection{Compared with the linear elasticity model.} 
Compared with the classical Hellinger--Reissner formulation of linear
elasticity problem, the stress-gradient model contains an additional higher-order
perturbation term. And will degenerate to the classical Hellinger--Reissner formulation in the limiting case $\iota=0$.  

This is worthy to mention that the displacement equation of the stress-gradient
model can be decoupled from the stress equation, if $\Gamma_f=\emptyset$; see
\eqref{eq:z-weak}. But this representation is particularly
useful when the body force $ f$ is explicitly available and
requires $\triangle f$ is well defined. Nevertheless, the limiting linear-elastic stress generally does not satisfy the additional
higher-order boundary condition imposed by the stress-gradient model. Such a boundary incompatibility generates a stress boundary
layer, and consequently the stress predicted by the stress-gradient model
may differ from its linear-elastic counterpart even when the displacement
equation can be decoupled \cite{LTLSG}. 

When $\Gamma_f\neq\emptyset$, the traction boundary condition couples the stress
and displacement through the boundary terms in the mixed formulation, and
the above decoupling is no longer available. In this case, the difference
between the stress-gradient and classical linear-elasticity models may
become more pronounced. For this reason, we aim to develop a family of practical finite element methods for solving LSG problems. 

\subsection{Existing Finite Elements for Symmetric Elasticity.} 
There are relatively few studies on discretizations of the LSG problem. 
In \cite{LTLSG}, three families of CG--DG pairs were proposed in two dimensions. 
However, their theoretical stability requires the polynomial degree 
$k\geq 7$, which makes them impractical for computation. 
For classical linear elasticity, many practical finite elements are available; 
see \cite{{Arnold2002,Arnoldweaksym,Arnold3D,HuZhang14,HZ3D,HZfamily,HZlowest,GuzmanAfiled,PaulyZulehner23}}. 
Nevertheless, most of these elements are $H(\operatorname{div})$-conforming 
and therefore cannot be directly applied to the LSG problem, which requires 
an $H^1$-conforming approximation of the symmetric stress tensor. This naturally motivates the use of mixed finite element pairs with
continuous stresses for discretizing the LSG problem.

Brezzi, et. al, considered several
finite element pairs with continuous stress for linear
elasticity \cite{BrezziFortinMarini1993}. Some of these pairs are also conforming for the LSG problem,
including the MINI-type and symmetric Taylor--Hood pairs. The symmetric Taylor--Hood pair is particularly attractive, since it admits higher-order extensions and can be implemented using standard Lagrange finite elements. However,
\cite{BrezziFortinMarini1993} pointed out that the stability of the
symmetric Taylor--Hood pair had not been established. Although other discretizations are available for linear
elasticity, resolving this stability problem is particularly important for
the LSG model, for which practical $H^1$-conforming symmetric stress
approximations are much more limited.

\subsection{Symmetric Taylor--Hood element.} 
One main contribution in this article is prove the inf-sup stability for symmetric Taylor--Hood element. The Taylor--Hood element was originally introduced for the numerical
approximation of the incompressible Navier--Stokes equations
\cite{TaylorHood1973} and has since become a classical discretization
of the Stokes problem. On simplicial meshes, it employs continuous
piecewise polynomials of degree \(k\) for the velocity and degree
\(k-1\) for the pressure, with \(k\geq 2\). The inf--sup stability of
this family has been established in
\cite{BercovierPironneau1979,Verfurth1984,Boffi1994,Boffi1997}.
More recently, a local Fortin operator for the Taylor--Hood pair in arbitrary dimension was
constructed in \cite{GiraultScott2003,DieningStornTscherpel2022}.

The corresponding stability arguments do not extend directly to the
symmetric Taylor--Hood pair, since the standard Stokes constructions can not preserve the symmetric constraint. And this constraint may cause some coupling across the vertex patch when adapting the macroelement technique in \cite{Stenberg1990TaylorHood}. The construction of a symmetry-preserving Fortin
operator is similarly more involved.

Fortunately, for the elasticity problem considered here, the stress
space can be chosen without imposing any essential boundary conditions.
Consequently, boundary-edge bubbles remain admissible. This argument also relies
on the fact that, for every $d$-simplex $T$,
\[
    \mathbb S^d
    =\operatorname{span}\{t_e t_e^T:\ e\in\mathcal E(T)\},
\]
where $\mathbb S^d$ denotes the space of real $d\times d$ symmetric
matrices and $\mathcal E(T)$ is the set of edges of $T$. The resulting
edge-patch argument establishes the inf--sup stability of the
symmetric Taylor--Hood pair of $k\geq 2$ for the LSG problem. Although
the result is presented here only in two and three dimensions, the same proof can be extend to arbitrary dimensions. 

It is worth mentioning that CG--CG (continuous Galerkin--continuous
Galerkin) pairs do not suffer from the criss-cross singularities arising
for the CG--DG pairs studied in \cite{LTLSG}, since interelement
continuity automatically enforces the corresponding compatibility
condition at every criss-cross vertex.

When $k=1$, the displacement space reduces the piecewise
constant functions. In this case, stability can be recovered by adding
a stabilization term following the technique of
\cite{HZlowest}. We also study this stabilized formulation
and combine it with Nitsche's method to solve general boundary
conditions. The same boundary treatment can be similarly applied to the
symmetric Taylor--Hood discretization. In addition, we establish
higher-order regularity estimates for the stress $\sigma$ when
$\Gamma_f=\emptyset$.

\textbf{Organization.} The remainder of this paper is organized as follows. Section~2
establishes the well-posedness of the continuous problem and derives
the corresponding regularity estimates. Section~3 proves the stability
of the symmetric Taylor--Hood pairs. Section~4 develops a stabilized
mixed finite element method and take Nitsche's method for the
general boundary conditions. Finally, Section~5 presents
numerical experiments confirming the theoretical results.

\section{Well-posedness and regularity}
In this section, we will show the well-posedness and the $H^2$-regularity result of problem
\eqref{lsg-weak}. The detalied of well-posedness can be found in \cite{LTLSG}. 
Define the corresponding norm on $\Sigma_f$ 
\[
\|\tau\|_{\iota}^2
:=
\|\tau\|_{0,\Omega}^2
+
\iota^2|\tau|_{1,\Omega}^2
+
\|\diver \tau\|_{0,\Omega}^2.
\] 
We equip $u$ with the standard $L^2$ norm. Here $\|\cdot\|_{m,G}$ represents the standard $H^m(G)$ norm of the Sobolev space and we will omit $G$ when $G=\Omega.$ Here $m=0,1,\dots$ and $m=0$ represents the standard $L^2(G)$ norm. Define \[(u,v)_G=\int_G uv\mathrm{d}x,~\forall u,v\in L^2(G),~( u, v)_G=\int_G u\cdot v\mathrm{d}x,~\forall u,v\in L^2(G;\mathbb R^d).\]When $G=\Omega$, we will omit it.
\begin{theorem}
    For all $f\in L^2(\Omega;\mathbb R^2)$, there exists a unique solution pair $(\sigma_{\iota},u_{\iota})\in H^1(\Omega;\mathbb S^2)\times L^2(\Omega;\mathbb R^2)$ of \eqref{lsg-weak}, such that
    $$
   \|\sigma_{\iota}\|_{\iota}+\|u_{\iota}\|_{0}\lesssim \|f\|_{0}.
    $$
    Here the hidden constant is independent of $\iota$ and $\lambda$.
\end{theorem}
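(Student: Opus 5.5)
We apply the Babuška--Brezzi theory to \eqref{lsg-weak} with homogeneous boundary data. The bilinear forms are
\[
a(\sigma,\tau):=\iota^2(\nabla\mathcal A\sigma,\nabla\tau)+(\mathcal A\sigma,\tau),\qquad b(\tau,v):=(\diver\tau,v),
\]
on $\Sigma\times Q$ with $\Sigma=H^1(\Omega;\mathbb S^2)$ (taking $\Gamma_f=\emptyset$, or $\Sigma_0$ otherwise) and the norm $\|\cdot\|_\iota$. The goal is to verify boundedness, the inf--sup condition for $b$, and coercivity of $a$ on the kernel, with all constants independent of $\iota$ and $\lambda$.

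\textbf{Boundedness.} $\mathcal A$ is a constant-coefficient tensor with $\|\mathcal A\|\le 1/(2\mu)$ uniformly in $\lambda\ge 0$, and it commutes with $\nabla$. Hence
\[
|a(\sigma,\tau)|\lesssim \iota^2|\sigma|_1|\tau|_1+\|\sigma\|_0\|\tau\|_0\le\|\sigma\|_\iota\|\tau\|_\iota ,
\]
and boundedness of $b$ is trivial.

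\textbf{Inf--sup.} Given $v\in L^2(\Omega;\mathbb R^2)$, extend $v$ by zero to a ball $B\supset\Omega$ and solve a Dirichlet elasticity (or vector Laplace) problem on $B$ for $w\in H^1_0\cap H^2(B)$ with $\diver\epsilon(w)=v$, so that $\|w\|_2\lesssim\|v\|_0$. Set $\tau=\epsilon(w)|_\Omega\in H^1(\Omega;\mathbb S^2)$. Then $\diver\tau=v$ and
\[
\|\tau\|_\iota\le\|\tau\|_1+\|v\|_0\lesssim\|v\|_0,
\]
using $\iota\le1$. This gives $\sup_\tau b(\tau,v)/\|\tau\|_\iota\gtrsim\|v\|_0$. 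If $\Gamma_f\neq\emptyset$, one must additionally correct $\tau n$ on $\Gamma_f$, e.g.\ with a divergence-free $H^1$ symmetric correction (Airy function) matching the traction; this is more delicate but not needed for the pure case.

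\textbf{Kernel coercivity, uniform in $\lambda$ (main obstacle).} On the kernel $\diver\tau=0$, we have
\[
a(\tau,\tau)\ge \tfrac{1}{2\mu}\|\tau^D\|_0^2+\iota^2\tfrac1{2\mu}|\tau^D|_1^2+\tfrac{1}{2\mu+2\lambda}\|\mathrm{tr}\,\tau\|_0^2+\cdots,
\]
where $\tau^D$ denotes the deviatoric part. The trace contributions degenerate as $\lambda\to\infty$, so we invoke the classical estimate $\|\tau\|_0\lesssim\|\tau^D\|_0+\|\diver\tau\|_{-1}$, valid for $\tau$ whose trace has zero mean (Arnold--Douglas--Gupta / Brezzi--Fortin). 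To apply it, first decompose $\Sigma=\{\tau:\int_\Omega\mathrm{tr}\,\tau=0\}\oplus\mathbb R I$, using that $I$ satisfies $\diver I=0$ and is orthogonal-compatible with the right-hand side (with pure displacement data, $\sigma$ is unique only up to this adjustment, which is fixed by the compatibility condition). Analogously, for the gradient term we need
\[
|\tau|_1\lesssim|\tau^D|_1+\|\diver\tau\|_0 .
\]
This follows from the pointwise identity in 2D: since $\partial_j\mathrm{tr}\,\tau$ can be expressed through $\diver\tau$ and derivatives of $\tau^D$ (namely $\diver(\tfrac12\mathrm{tr}\,\tau\, I)=\diver\tau-\diver\tau^D$), we get $\|\nabla\mathrm{tr}\,\tau\|_0\lesssim\|\diver\tau\|_0+|\tau^D|_1$.

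Combining these gives $a(\tau,\tau)\gtrsim\|\tau\|_0^2+\iota^2|\tau|_1^2=\|\tau\|_\iota^2$ on the kernel, and Brezzi's theorem then yields existence, uniqueness, and the stated bound with constants independent of $\iota$ and $\lambda$.
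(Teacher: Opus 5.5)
The paper does not prove this theorem; it only defers to \cite{LTLSG}, so there is no in-paper argument to compare against. Your Babu\v{s}ka--Brezzi argument is the standard route, and it is correct:
\begin{itemize}
\item \emph{Boundedness.} $\mathcal A$ has eigenvalues $\frac{1}{2\mu}$ and $\frac{1}{2\mu+d\lambda}$, so $a$ is bounded uniformly in $\lambda$.
\item \emph{Inf--sup.} The lifting $\tau=\epsilon(w)|_\Omega$ gives an inf--sup constant independent of $\iota$, because $\iota\le1$.
\item \emph{Kernel coercivity.} The identity $\frac1d\nabla\mathrm{tr}\,\tau=\diver\tau-\diver\tau^D$ gives, for $\diver\tau=0$ and $\int_\Omega\mathrm{tr}\,\tau=0$, both $|\mathrm{tr}\,\tau|_1\lesssim|\tau^D|_1$ and, via Ne\v{c}as' inequality, $\|\mathrm{tr}\,\tau\|_0\lesssim\|\diver\tau^D\|_{-1}\le\|\tau^D\|_0$. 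Hence $a(\tau,\tau)\ge\frac{1}{2\mu}\bigl(\|\tau^D\|_0^2+\iota^2|\tau^D|_1^2\bigr)\gtrsim\|\tau\|_\iota^2$ on the restricted kernel.
\end{itemize}

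One statement needs correcting. $\sigma$ is not ``unique only up to'' multiples of $I$: for each finite $\lambda$ the tensor $\mathcal A$ is positive definite, and the solution is unique. What is true is that testing the first equation with $\tau=I$ gives $(\mathcal A\sigma,I)=\frac{1}{2\mu+d\lambda}\int_\Omega\mathrm{tr}\,\sigma=0$. So every solution lies in $\hat\Sigma:=\{\tau\in H^1(\Omega;\mathbb S^d):\int_\Omega\mathrm{tr}\,\tau=0\}$. The clean argument is:
\begin{itemize}
\item Pose the problem on $\hat\Sigma\times Q$.
\item Check the inf--sup condition there by replacing your lifting with $\tau-\frac{1}{d|\Omega|}\bigl(\int_\Omega\mathrm{tr}\,\tau\bigr)I$, which has the same divergence and a comparable norm.
\item Apply Brezzi's theorem with $\lambda$- and $\iota$-uniform constants.
\item Observe that the equation tested with $I$ then holds trivially: $a(\sigma,I)=\frac{1}{2\mu+d\lambda}\int_\Omega\mathrm{tr}\,\sigma=0$, $\diver I=0$, and the right-hand side vanishes for homogeneous data.
\end{itemize}
Uniqueness in the full space follows because every solution lies in $\hat\Sigma$.

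A minor point: your parenthetical vector-Laplace alternative needs $\tau=2\epsilon(w)-(\diver w)I$ with $\Delta w=v$, since $\diver\epsilon(w)\neq\Delta w$.
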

The asymptotic result of the solution can also be found in \cite{LTLSG}. Let $(\sigma_{\iota}, u_{\iota})$ solve \eqref{lsg-weak}, and $(u_0,\sigma_0)$ solve the following equation:
\begin{equation}
\begin{cases}
    (\mathcal A\sigma_0,\tau)+b(\tau,u_0)=0,& \forall\tau\in H(\diver;\Omega),\\
   b(\sigma_0,v)=(f,v), &\forall v\in L^2(\Omega).
\end{cases}
\label{eq:iota0}
\end{equation}


\begin{proposition}
\label{prop:bdlayer-type}
For \(0<\iota\le 1\), let $(\sigma_\iota,u_\iota)$ solve \eqref{lsg-weak}, and let $(\sigma_0,u_0)$ solve \eqref{eq:iota0}. 
Assume that $\sigma_0\in H^1(\Omega;\mathbb S^2)$. Then
\[
    \|\sigma_\iota-\sigma_0\|_\iota
    +
    \|u_\iota-u_0\|_{0}
    \lesssim
    \iota |\sigma_0|_{1}.
\]
If, in addition, $\sigma_0\in H^2(\Omega;\mathbb S^2)$, then
\[
    \iota|\sigma_\iota-\sigma_0|_{1}
    +
    \|\sigma_\iota-\sigma_0\|_{0}
    \lesssim
    \iota^{3/2}
    \|\partial_n\sigma_0\|_{0,\partial\Omega}
    +
    \iota^2|\sigma_0|_{2}.
\]
Consequently,
\[
    \|u_\iota-u_0\|_{0}
    \lesssim
    \iota^{3/2}
    \|\partial_n\sigma_0\|_{0,\partial\Omega}
    +
    \iota^2|\sigma_0|_{1}
    +
    \iota^2|\sigma_0|_{2}.
\]
\end{proposition}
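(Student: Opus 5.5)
We will argue by energy estimates on the error equations. We take $\Gamma_f=\emptyset$ and homogeneous data as in \eqref{eq:iota0}, so that $F(g_c,g_\sigma;\tau)=0$ and the stability theorem above applies. Set $e_\sigma=\sigma_\iota-\sigma_0$ and $e_u=u_\iota-u_0$.

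\textbf{Error equations.} Since $\sigma_0\in H^1(\Omega;\mathbb S^2)\subset H(\diver;\Omega)$, we may subtract \eqref{eq:iota0} from \eqref{lsg-weak} for every $\tau\in H^1(\Omega;\mathbb S^2)$. This gives
\begin{equation*}
\iota^2(\nabla\mathcal A e_\sigma,\nabla\tau)+(\mathcal A e_\sigma,\tau)+(\diver\tau,e_u)=-\iota^2(\nabla\mathcal A\sigma_0,\nabla\tau),
\qquad (\diver e_\sigma,v)=0
\end{equation*}
for all $v\in Q$.

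\textbf{First estimate.} The error pair solves the perturbed saddle-point problem with a right-hand side functional $\ell(\tau)=-\iota^2(\nabla\mathcal A\sigma_0,\nabla\tau)$. Its dual norm with respect to $\|\cdot\|_\iota$ is at most $C\iota|\sigma_0|_1$, because $|\ell(\tau)|\le \iota|\mathcal A\sigma_0|_1\cdot\iota|\tau|_1$. The stability underlying the preceding theorem, namely a uniform inf-sup condition in $\|\cdot\|_\iota\times L^2$, then gives $\|e_\sigma\|_\iota+\|e_u\|_0\lesssim\iota|\sigma_0|_1$. One can also see this directly. Test with $\tau=e_\sigma$, use $\diver e_\sigma=0$, and note that $\mathcal A$ is coercive on the divergence-free part uniformly in $\lambda$ via the usual trace-deviator argument. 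This yields $\|e_\sigma\|_0+\iota|e_\sigma|_1\lesssim\iota|\sigma_0|_1$. The displacement bound then follows from the inf-sup condition for $(\diver\tau,e_u)$ over $H^1$ symmetric tensors.

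\textbf{Second estimate.} When $\sigma_0\in H^2$, we integrate the right-hand side by parts:
\begin{equation*}
-\iota^2(\nabla\mathcal A\sigma_0,\nabla\tau)=\iota^2(\Delta\mathcal A\sigma_0,\tau)-\iota^2\langle\partial_n\mathcal A\sigma_0,\tau\rangle_{\partial\Omega}.
\end{equation*}
Testing with $\tau=e_\sigma$, the volume term is bounded by $\iota^2|\sigma_0|_2\|e_\sigma\|_0$. For the boundary term we use the multiplicative trace inequality
\begin{equation*}
\|\tau\|_{0,\partial\Omega}^2\lesssim\|\tau\|_0\|\tau\|_1\lesssim \iota^{-1}\bigl(\|\tau\|_0^2+\iota^2|\tau|_1^2\bigr).
\end{equation*}
This bounds the boundary term by $\iota^{2}\|\partial_n\sigma_0\|_{0,\partial\Omega}\,\iota^{-1/2}(\|e_\sigma\|_0+\iota|e_\sigma|_1)$. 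Young's inequality and the $\lambda$-robust coercivity then give $\iota|e_\sigma|_1+\|e_\sigma\|_0\lesssim\iota^{3/2}\|\partial_n\sigma_0\|_{0,\partial\Omega}+\iota^2|\sigma_0|_2$.

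\textbf{Displacement.} For $e_u$ we take the inf-sup test function $\tau\in H^1(\Omega;\mathbb S^2)$ with $\diver\tau=e_u$ and $\|\tau\|_1\lesssim\|e_u\|_0$. This makes sense with $\Gamma_f=\emptyset$ and no boundary constraint on $\tau$. Substituting into the error equation gives
\begin{equation*}
\|e_u\|_0^2\le \iota^2|e_\sigma|_1|\tau|_1\cdot C+C\|e_\sigma\|_0\|\tau\|_0+\iota^2|\sigma_0|_1|\tau|_1.
\end{equation*}
Dividing by $\|e_u\|_0$ and inserting the second estimate, noting $\iota\cdot\iota|e_\sigma|_1\le\iota|e_\sigma|_1$, yields the stated bound including the extra $\iota^2|\sigma_0|_1$ term.

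The main obstacle is keeping all constants uniform in $\lambda$. The coercivity of $(\mathcal A\cdot,\cdot)$ degenerates on the trace part as $\lambda\to\infty$, and the $H^1$ seminorm weighted by $\mathcal A$ has the same issue. We will need the standard estimate $\|\operatorname{tr}\tau\|_0\lesssim\|\tau^D\|_0+\|\diver\tau\|_{-1}$, extended so that it also controls the gradient term through $\nabla\mathcal A\tau$. This requires care, since $\nabla$ and $\mathcal A$ commute but the trace part of $|\tau|_1$ is again only weakly controlled. If needed, we will absorb it using the already-established first estimate.
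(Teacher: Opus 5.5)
The paper does not prove this proposition; it defers to \cite{LTLSG}. The closest argument in the text is the estimate for the layer part $B_\iota$ in the proof of Theorem~\ref{lem:pure-displacement-regularity}, which uses the same bound $\|B_\iota\|_{0,\partial\Omega}\lesssim\iota^{-1/2}E_\iota(B_\iota)$ that drives your second estimate. Your skeleton is the natural one and its algebra is correct: the error equations, testing with $e_\sigma$, Green's formula producing the $\partial_n\sigma_0$ boundary term, the multiplicative trace inequality, and a divergence lifting of $e_u$. If the constants may depend on $\lambda$, it already proves the statement, since then $(\mathcal A\tau,\tau)\ge(2\mu+d\lambda)^{-1}\|\tau\|_0^2$.

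The gap is $\lambda$-uniformity. You identify it as the main obstacle but do not close it, and both remedies you mention fail.

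\textbf{The $L^2$ part.} $\mathcal A$ is \emph{not} uniformly coercive on divergence-free tensors: $\tau=I$ is divergence-free and $(\mathcal A I,I)=d|\Omega|/(2\mu+d\lambda)$. For the same reason, the kernel coercivity behind a ``uniform inf-sup in $\|\cdot\|_\iota\times L^2$'' degenerates on $\mathbb R I$. What you need is $\int_\Omega\mathrm{tr}\,e_\sigma\,dx=0$. This follows by testing \eqref{lsg-weak} and \eqref{eq:iota0} with $\tau=I$, using $\nabla I=0$, $\diver I=0$ and $\mathcal A I=(2\mu+d\lambda)^{-1}I$. Only then does the trace--deviator lemma give $\|e_\sigma\|_0\lesssim\|e_\sigma^D\|_0$.

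\textbf{The $H^1$ part.} Absorbing the trace part of $|e_\sigma|_1$ with the first estimate only yields $\iota|\mathrm{tr}\,e_\sigma|_1\lesssim\iota|\sigma_0|_1$. That is $O(\iota)$ and destroys the $\iota^{3/2}$ rate of the second estimate. The missing observation is that $\diver e_\sigma=0$ gives $\nabla\mathrm{tr}\,e_\sigma=-d\,\diver e_\sigma^D$. Hence $|\mathrm{tr}\,e_\sigma|_1\lesssim|e_\sigma^D|_1\le(2\mu)^{1/2}(\nabla\mathcal A e_\sigma,\nabla e_\sigma)^{1/2}$.

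\textbf{Suggested organization.} Work throughout with $E^2:=(\mathcal A e_\sigma,e_\sigma)+\iota^2(\nabla\mathcal A e_\sigma,\nabla e_\sigma)$. Bound every right-hand side by Cauchy--Schwarz in the $\mathcal A$-inner product. For the boundary term, apply the trace inequality to $\mathcal A^{1/2}e_\sigma$, so that $\|\mathcal A^{1/2}e_\sigma\|_{0,\partial\Omega}\lesssim\iota^{-1/2}E$. This gives, with $\lambda$-independent constants:
\begin{itemize}
\item $E\lesssim\iota|\sigma_0|_1$;
\item $E\lesssim\iota^{3/2}\|\partial_n\sigma_0\|_{0,\partial\Omega}+\iota^2|\sigma_0|_2$ when $\sigma_0\in H^2$;
\item $\|e_u\|_0\lesssim E+\iota^2|\sigma_0|_1$.
\end{itemize}
The two facts above are then needed only once, to convert $E$ into the unweighted quantity $\|e_\sigma\|_0+\iota|e_\sigma|_1$ that appears in the statement.
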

To give the $H^2$ estimation for $\sigma_{\iota}$, we assume the elasticity problem solution has the following regularity result.
\begin{assume}
    Let $(\sigma_0,u_0)$ be the solution of the
classical elasticity problem \eqref{eq:iota0}, and
assume the following regularity estimate:
\begin{equation}
\label{eq:uniform-elasticity-regularity}
    \|\sigma_0\|_{2}
    +\|u_0\|_{1}
    \lesssim \|f\|_{1},
\end{equation}
where the hidden constant is independent of $\lambda$.
\label{Ass:regularity-elasticity}
\end{assume}
For the convex domain in 2D, assumption \ref{Ass:regularity-elasticity} is established in \cite{BrennerSung1992}.

\begin{theorem}
    \label{lem:pure-displacement-regularity}
Let $\Omega$ be a convex polygonal bounded domain. 
Let $\Gamma_f=\emptyset$, $\Gamma_c=\partial\Omega$, and assume that
$g_c=g_\sigma=0$. Let $0<\iota\leq 1$, $f\in H^1(\Omega;\mathbb R^d)$,
and suppose that the Lam\'e parameters satisfy
\[
    0<\mu_0\leq\mu\leq\mu_1,
    \qquad \lambda\geq 0.
\]
Assume $(\sigma_\iota,u_\iota)$ be the solution of the linear stress
gradient problem \eqref{lsg-strong} with homogenous boundary condition, and the corresponding elasticity solution $(\sigma_0,u_0)$ satisfies assumption \ref{Ass:regularity-elasticity}, then
\begin{equation}
\label{eq:gradient-stress-regularity}
    \|u_\iota\|_{1}
    +\|\sigma_\iota\|_{1}
    +\iota^{1/2}\|\sigma_\iota\|_{2}
    \lesssim \|f\|_{1}.
\end{equation}
\end{theorem}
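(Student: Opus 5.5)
Since $\Gamma_f=\emptyset$ and the data are homogeneous, I will compare $\sigma_\iota$ with the elasticity stress $\sigma_0$ from \eqref{eq:iota0}. The tools are Proposition~\ref{prop:bdlayer-type}, Assumption~\ref{Ass:regularity-elasticity}, and an interior/tangential regularity argument for the strong form. Put $w:=\sigma_\iota-\sigma_0$. By Assumption~\ref{Ass:regularity-elasticity}, $\|\sigma_0\|_2+\|u_0\|_1\lesssim\|f\|_1$. By the second estimate of Proposition~\ref{prop:bdlayer-type} and the trace inequality $\|\partial_n\sigma_0\|_{0,\partial\Omega}\lesssim\|\sigma_0\|_2$,
\[
\iota|w|_1+\|w\|_0\lesssim \iota^{3/2}\|\sigma_0\|_2+\iota^2|\sigma_0|_2\lesssim\iota^{3/2}\|f\|_1 .
\]
Hence $|\sigma_\iota|_1\le|\sigma_0|_1+|w|_1\lesssim\|f\|_1$, which gives the $\|\sigma_\iota\|_1$ bound uniformly in $\iota\le1$ and $\lambda$.

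For the displacement I use the first equation of \eqref{lsg-strong}: $\epsilon(u_\iota)=\mathcal A(\sigma_\iota-\iota^2\triangle\sigma_\iota)$ with $u_\iota=0$ on $\partial\Omega$. Testing \eqref{lsg-weak} against $\tau=\epsilon(\phi)$-type fields is awkward. A cleaner route is to integrate by parts in the weak form: since $\Gamma_f=\emptyset$, every symmetric $H^1$ field is admissible, so $\epsilon(u_\iota)$ is identified with $\mathcal A\sigma_\iota-\iota^2\,\diver(\mathcal A\nabla\sigma_\iota)$ in $H^{-1}$. The natural condition $\partial_n\sigma=0$ removes the boundary terms. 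Korn's inequality with zero Dirichlet data then gives
\[
\|u_\iota\|_1\lesssim\|\epsilon(u_\iota)\|_0\lesssim\|\mathcal A\sigma_\iota\|_0+\iota^2\|\mathcal A\triangle\sigma_\iota\|_0 .
\]
Thus it suffices to show $\iota^2\|\triangle\sigma_\iota\|_0\lesssim\iota^{1/2}\|f\|_1$, which follows from the $H^2$ bound below. The constants stay independent of $\lambda$ because $\|\mathcal A\tau\|_0\le\|\tau\|_0/\mu$.

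The main step is the $H^2$ bound $\iota^{1/2}\|\sigma_\iota\|_2\lesssim\|f\|_1$. Since $\mathcal A$ is constant, $\mathcal A w$ satisfies a Neumann problem for $-\iota^2\triangle+I$:
\[
-\iota^2\triangle(\mathcal A w)+\mathcal A w=\epsilon(u_\iota-u_0)+\iota^2\mathcal A\triangle\sigma_0 \ \text{in }\Omega,\qquad \partial_n(\mathcal A w)=-\mathcal A\partial_n\sigma_0 \ \text{on }\partial\Omega .
\]
I would first lift the boundary data with a function $\eta$ concentrated in a layer of width $\iota$, with $\partial_n\eta=-\mathcal A\partial_n\sigma_0$ and $\|\eta\|_2\lesssim\iota^{-1/2}\|\partial_n\sigma_0\|_{1/2,\partial\Omega}$. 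Then I would apply the $H^2$ regularity of the Neumann Laplacian on the convex polygon, $\|v\|_2\lesssim\|\triangle v\|_0+\|v\|_1$, to $v=\mathcal A w-\eta$. This reduces everything to bounding $\iota^2\|\triangle(\mathcal A w)\|_0$, which by the equation equals $\|\epsilon(u_\iota-u_0)+\iota^2\mathcal A\triangle\sigma_0-\mathcal A w\|_0$.

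This is where I expect the difficulty. I only know $\|u_\iota-u_0\|_0$, not the $H^1$ norm of $u_\iota-u_0$. To circumvent this I would test the equation with $-\triangle(\mathcal A w)$, or with tangential difference quotients near the boundary, and integrate by parts, using $\diver w=0$ so that $(\epsilon(u_\iota-u_0),\triangle\mathcal A w)$ becomes a controlled boundary term. The aim is an energy identity of the form $\iota^2\|\nabla^2 w\|_0^2+|w|_1^2\lesssim\iota^{-1}\|f\|_1^2$. Combined with $\|\sigma_0\|_2\lesssim\|f\|_1$, this yields $\iota^{1/2}\|\sigma_\iota\|_2\lesssim\|f\|_1$. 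Feeding this back into the Korn estimate closes the bound for $\|u_\iota\|_1$.
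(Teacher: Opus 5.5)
Your proposal has a genuine gap, and it is exactly where you flag the difficulty. You never obtain an $L^2$ bound on $\epsilon(u_\iota-u_0)$. Your $H^2$ estimate needs it, and your bound on $\|u_\iota\|_1$ is routed through that $H^2$ estimate, so both claims remain open.

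The workaround you sketch fails as written. With $z:=u_\iota-u_0\in H_0^1$, integration by parts gives $(\epsilon(z),\triangle\mathcal A w)=-(z,\triangle\diver(\mathcal A w))$. Although $\diver w=0$, you have $\diver(\mathcal A w)=-\frac{\lambda}{2\mu(2\mu+d\lambda)}\nabla\mathrm{tr}\,w\neq0$. You are left pairing third derivatives of $w$ with $z$, and nothing controls that. Testing with $-\triangle w$ instead does remove this term. But the resulting energy then only controls $\iota^2(\mathcal A\triangle w,\triangle w)$, whose trace part carries the weight $1/(d(2\mu+d\lambda))$, so on its own it degenerates as $\lambda\to\infty$.

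The same $\lambda$-defect affects your plan to apply Neumann regularity to $\mathcal A w$: recovering $\|w\|_2$ from $\|\mathcal A w\|_2$ costs a factor $2\mu+d\lambda$ on $\mathrm{tr}\,w$. Finally, your energy target $\iota^2\|\nabla^2 w\|_0^2\lesssim\iota^{-1}\|f\|_1^2$ has the wrong power of $\iota$. It only yields $\iota^{1/2}\|w\|_2\lesssim\iota^{-1}\|f\|_1$, whereas you need $\iota^2\|\nabla^2 w\|_0^2\lesssim\iota\|f\|_1^2$.

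The missing idea, which is how the paper proceeds, is to eliminate the stress by taking the divergence of the constitutive law. With $\mathcal C=\mathcal A^{-1}$ and $\diver\sigma_\iota=f$, you get $\diver(\mathcal C\epsilon(u_\iota))=f-\iota^2\triangle f$. Hence $z\in H_0^1$ solves a Lam\'e problem with load $-\iota^2\triangle f$, whose $H^{-1}$ norm is at most $\iota^2|f|_1$. Korn's inequality gives $\|z\|_1\lesssim\iota^2|f|_1$, and so $\|u_\iota\|_1\le\|u_0\|_1+\|z\|_1\lesssim\|f\|_1$ at once. The Stokes inf--sup condition applied to $\lambda\diver z\in L_0^2$ then gives the $\lambda$-robust bound $\|\mathcal C\epsilon(z)\|_0\lesssim\iota^2|f|_1$.

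Since $\mathcal C\epsilon(u_0)=\sigma_0$, the constitutive law becomes $\iota^2\triangle\sigma_\iota=w-\mathcal C\epsilon(z)$ with $\partial_n\sigma_\iota=0$. This is a decoupled Neumann problem with controlled data. The paper then splits $w$ into a boundary layer $B_\iota$ driven by $-\partial_n\sigma_0$ and a regular part $R_\iota$, and applies energy estimates and Neumann $H^2$ regularity to each. In your setting there is a shorter finish. Taking $\|w\|_0\lesssim\iota^{3/2}\|f\|_1$ from Proposition~\ref{prop:bdlayer-type}, you get $\|\triangle\sigma_\iota\|_0\lesssim\iota^{-1/2}\|f\|_1$. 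Neumann regularity applied to $\sigma_\iota$ itself then gives $\iota^{1/2}\|\sigma_\iota\|_2\lesssim\|f\|_1$ with no lifting at all.
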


\begin{proof}
Let $\mathcal C:=\mathcal A^{-1}$. The constitutive equation can be written as
\[
    \sigma_\iota-\iota^2\Delta\sigma_\iota
    =\mathcal C\epsilon(u_\iota).
\]
Taking the divergence and using $\mathrm{div}\sigma_\iota=f$, we obtain
\begin{equation} 
    \mathrm{div}\bigl(\mathcal C\epsilon(u_\iota)\bigr)
    =f-\iota^2\Delta f.
    \label{eq:uiota}
\end{equation}
Consequently, for $z_\iota:=u_\iota-u_0$,
\[
    \mathrm{div}\bigl(\mathcal C\epsilon(z_\iota)\bigr)
    =-\iota^2\Delta f
    \quad\text{in }\Omega,
    \quad
    z_\iota=0
    \quad\text{on }\partial\Omega.
\]
Its weak formulation reads
\begin{equation}
\label{eq:z-weak}
    2\mu(\epsilon(z_\iota),\epsilon(v))
    +\lambda(\mathrm{div} z_\iota,\mathrm{div} v)
    =
    \iota^2\langle\Delta f,v\rangle
\end{equation}
for all $v\in H_0^1(\Omega;\mathbb R^d)$. Taking $v=z_\iota$,
using Korn's inequality,  we find
\[
    \mu\|z_\iota\|_{1,\Omega}^2
    \lesssim
    \iota^2|f|_{1,\Omega}\|z_\iota\|_{1,\Omega}.
\]
It follows that
\begin{equation}
\label{eq:z-estimate}
    \|z_\iota\|_{1,\Omega}
    \lesssim \iota^2|f|_{1,\Omega}.
\end{equation}
We next prove a parameter-uniform estimate for the stress increment.
Since $z_\iota\in H_0^1(\Omega;\mathbb R^d)$, then $\int_{\Omega}\mathrm{tr}\epsilon(z_{\iota}) 
\mathrm{d}x=0.$ Thus, $\lambda\mathrm{tr}\epsilon(z_{\iota})\in L_0^2(\Omega)$. By the divergence inf-sup
condition of Stokes problem and \eqref{eq:z-weak},
\[
\begin{aligned}
    \|\lambda\mathrm{tr}\epsilon(z_{\iota})\|_{0}
    &\lesssim
    \sup_{0\neq v\in H_0^1(\Omega;\mathbb R^d)}
    \frac{(\lambda\mathrm{tr}\epsilon(z_{\iota}),\mathrm{div} v)}
         {\|v\|_{1}}      
    \lesssim
    \iota^2|f|_{1}.
\end{aligned}
\]
Therefore,
$
    \|\mathcal C\epsilon(z_\iota)\|_{0}
    \lesssim
    \iota^2|f|_{1,\Omega}.
$
Defining
$
    \tau_\iota
    :=\iota^{-2}\mathcal C\epsilon(z_\iota),
$
Then we have
\begin{equation}
\label{eq:q-estimate}
    \|\tau_\iota\|_{0}
    \lesssim |f|_{1}.
\end{equation}
Moreover, the constitutive equation becomes
\begin{equation}
\label{eq:sigma-reaction}
    \sigma_\iota-\iota^2\Delta\sigma_\iota
    =\sigma_0+\iota^2\tau_\iota,
    \quad
    \partial_n\sigma_\iota=0.
\end{equation}
We now decompose
\[
    \sigma_\iota=\sigma_0+B_\iota+R_\iota,
\]
where $B_{\iota}$ satisfies
\begin{equation}
\label{eq:boundary-layer-part}
\begin{cases}
    B_\iota-\iota^2\Delta B_\iota=0
        &\text{in }\Omega,\\
    \partial_nB_\iota=-\partial_n\sigma_0
        &\text{on }\partial\Omega,
\end{cases}
\end{equation}
and $R_{\iota}$ satisfies
\begin{equation}
\label{eq:regular-part}
\begin{cases}
    R_\iota-\iota^2\Delta R_\iota
       =\iota^2\bigl(\Delta\sigma_0+\tau_\iota\bigr)
        &\text{in }\Omega,\\
    \partial_nR_\iota=0
        &\text{on }\partial\Omega.
\end{cases}
\end{equation}
Set
$
    g:=\partial_n\sigma_0,
    ~
    \Phi_\iota:=\Delta\sigma_0+\tau_\iota.
$
Let
\[
    E_\iota(B_\iota)^2
    :=
    \|B_\iota\|_{0}^2
    +\iota^2\|\nabla B_\iota\|_{0}^2.
\]
Testing \eqref{eq:boundary-layer-part} with $B_\iota$ gives
\[
    E_\iota(B_\iota)^2
    =-\iota^2\langle g,B_\iota\rangle_{\partial\Omega}.
\]
The trace inequality implies
\[
\begin{aligned}
    \|B_\iota\|_{0,\partial\Omega}
    &\lesssim
    \|B_\iota\|_{0,\Omega}^{1/2}
    \|B_\iota\|_{1,\Omega}^{1/2} \lesssim
    \iota^{-1/2}E_\iota(B_\iota).
\end{aligned}
\]
Consequently,
\begin{equation} 
    E_\iota(B_\iota)
    \lesssim
\iota^{3/2}\|g\|_{0,\partial\Omega},~    \|B_\iota\|_{1}
    \lesssim
    \iota^{1/2}\|g\|_{0,\partial\Omega}.
\label{eq:B-energy}
\end{equation}

To derive the $H^2$ estimate, set
\[
    W_\iota:=\sigma_0+B_\iota.
\]
It follows from \eqref{eq:boundary-layer-part} that
\[
    \partial_nW_\iota=0,
    \qquad
    \Delta W_\iota
    =\Delta\sigma_0+\iota^{-2}B_\iota
    \in L^2(\Omega;\mathbb S^d).
\]
Applying the homogeneous Neumann regularity estimate
\cite{Neumannregularity1} componentwise, we obtain
\begin{align*}
    \|W_\iota\|_{2}
    &\lesssim
    \|\Delta W_\iota\|_{0}
    +\|W_\iota\|_{0} \\
    &\lesssim
    \|\sigma_0\|_{2}
    +\iota^{-2}\|B_\iota\|_{0}
    +\|B_\iota\|_{0} \\
    &\lesssim
    \|\sigma_0\|_{2}
    +\iota^{-1/2}\|g\|_{0,\partial\Omega}.
\end{align*}
Since the trace theorem gives
$
    \|g\|_{0,\partial\Omega}
    =\|\partial_n\sigma_0\|_{0,\partial\Omega}
    \lesssim
    \|\sigma_0\|_{2,\Omega},
$
and $0<\iota\leq1$, it follows that
\[
    \|W_\iota\|_{2,\Omega}
    \lesssim
    \iota^{-1/2}\|\sigma_0\|_{2,\Omega}.
\]
Therefore, 
\begin{equation}
\label{eq:B-H2}
    \|B_\iota\|_{2}
    \lesssim
    \iota^{-1/2}\|\sigma_0\|_{2}.
\end{equation}
Similarly, testing \eqref{eq:regular-part} with $R_\iota$ yields
$
    \|R_\iota\|_{0}
    +\iota\|\nabla R_\iota\|_{0}
    \lesssim
    \iota^2\|\Phi_\iota\|_{0}.
$
Note that $R_{\iota}$ satsifies
\[
    \Delta R_\iota
    =\iota^{-2}R_\iota-\Phi_\iota,
\]
then the homogeneous Neumann regularity estimate
\cite{Neumannregularity1} gives
\begin{equation}
\label{eq:R-H2}
    \|R_\iota\|_{2,\Omega}
    \lesssim
    \|\Phi_\iota\|_{0,\Omega}.
\end{equation}
By \eqref{eq:uniform-elasticity-regularity},
\eqref{eq:q-estimate}, and the edgewise trace theorem,
\[
    \|g\|_{0,\partial\Omega}
    +\|\Phi_\iota\|_{0}
    \lesssim
    \|\sigma_0\|_{2}
    +\|\tau_\iota\|_{0}
    \lesssim
    \|f\|_{1,\Omega}.
\]
Therefore, using \eqref{eq:B-energy} and the estimate for
$R_\iota$ in $H^1(\Omega)$,
\[
\begin{aligned}
    \|\sigma_\iota-\sigma_0\|_{1}
    &\leq
    \|B_\iota\|_{1}
    +\|R_\iota\|_{1} \\
    &\lesssim
    \iota^{1/2}\|g\|_{0,\partial\Omega}
    +\iota\|\Phi_\iota\|_{0} \lesssim
    \iota^{1/2}\|f\|_{1}.
\end{aligned}
\]
Moreover, by \eqref{eq:B-H2} and \eqref{eq:R-H2},
\[
\begin{aligned}
    \iota^{1/2}\|\sigma_\iota\|_{2}
    \lesssim
    \iota^{1/2}\|\sigma_0\|_{2}
    +\iota^{1/2}\|B_\iota\|_{2}
    +\iota^{1/2}\|R_\iota\|_{2} 
\lesssim
    \|f\|_{1}.
\end{aligned}
\]
Thus, we finish the proof.
\end{proof}
\begin{remark}
The problem \eqref{lsg-strong} may haven't the solution $(\sigma_\iota,u_{\iota})$ with compatible boundary condition $\partial_n\sigma_{\iota}=0$ and $u_{\iota}=0$. When such a solution does not exist, the finite element
solutions of the weak formulation \eqref{lsg-weak} converge to a solution satisfying the following boundary condition
\[
    \frac{1}{2}\left[
        u_\iota\otimes n
        +n\otimes u_\iota
    \right]
    +\iota^2\mathcal A
    \partial_n\sigma_\iota
    =0
    \quad\text{on }\partial\Omega .
\]
On a sufficiently smooth domain, combining the classical
$H^2$-regularity theory for elliptic systems
\cite{AgmonDouglisNirenberg1964} with the asymptotic estimate
\eqref{prop:bdlayer-type}, we obtain
\[
    \iota^2\|\sigma_\iota-\sigma_0\|_{2,\Omega}
    +\|u_\iota-u_0\|_{1,\Omega}
    \lesssim \iota^{3/2}\|f\|_{1,\Omega}.
\]
When \eqref{lsg-strong} admits a sufficiently regular solution, this
solution coincides with the solution of \eqref{lsg-weak} and
satisfies the estimate in
Theorem \ref{eq:gradient-stress-regularity}.
\end{remark}
\section{Symmetric Taylor--Hood formulation}
In this section, we will discrete problem \eqref{lsg-weak} by symmetric Taylor-hood pairs with natural homogenous boundary condition, i.e., $\Gamma_f=\emptyset,~g_c=0,g_\sigma=0$. Let $\mathcal T_h$ be a shape regular simplex triangulation of $\Omega,$ and the mesh size defined as $h$. And all $1$-dimensional face set denote as $\mathcal E_h.$ $\omega_e$ represents the element patch of $e,$ i.e,
$\omega_e=\{T\in \mathcal{T}_h| T\cap e=e\}$. In 2 dimensional, for interior edges, $\omega_e$ always include two elements and it is a star patch for $3$ dimension case. For boundary edges, $\omega_e$ only contain one element in 2D. For a piecewise polynomial function $v_h$ on $\mathcal{T}_h$, the jump of $v_h$ is defined as: For an interior face $F=T^+\cap T^-$, fix a unit normal $n_F$ from $T^+$ to $T^-$ and set
\[
  \jump{v_h}|_F:=v_h^+-v_h^-.
\]
On a boundary face, we set $\jump{v_h}:=v_h$. Define 
\begin{equation}
    \Sigma_k = \{\sigma\in H^1(\Omega;\mathbb S^d)|~\sigma|_T\in P_k(T;\mathbb S^d) \},~Q_{k-1}:=\{q\in H^1(\Omega;\mathbb R^d)|~q|_T\in P_{k-1}(T;\mathbb R^d)\},
\end{equation}
The corresponding discrete formulation of \eqref{lsg-weak} is: Seek $(\sigma_h,q_h)\in \Sigma_k\times Q_{k-1}$, such that
\begin{equation}
\begin{cases}
a^S(\sigma_h,\tau_h)+b(\tau_h,p_h)=(f,\mathrm{div}\tau_h),& \forall\tau_h\in \Sigma_k,\\
   b(\sigma_h,q_h)=(f,q_h), &\forall q_h\in Q_{k-1}.
\end{cases}
\label{lsg-THdiscrete}
\end{equation}
Here \[a^{S}(\sigma_h,\tau_h):= \iota^2(\nabla\mathcal A\sigma_h,\nabla\tau_h )+  (\mathcal A\sigma_h,\tau_h)+(\mathrm{div}\sigma_h,\mathrm{div\tau_h})\] 
 and $b(\sigma_h,q_h):=(\mathrm{div}\sigma_h,q_h).$ 
The associated norm of $\Sigma_k$ is defined as
\[
\|\sigma_h\|_{\iota}^2:=a_{\iota}(\sigma_h,\sigma_h)+\|\mathrm{div}\sigma_h\|_0^2,
\]
where $a_{\iota}(\sigma_h,\sigma_h):=\iota^2(\nabla\mathcal A\sigma_h,\nabla\sigma_h )+  (\mathcal A\sigma_h,\sigma_h).$
And $Q_k$ equip the standard $L^2$-norm.  The coercivity is naturally obtained the definition of $a^S(\cdot,\cdot)$. Next section we will prove the inf-sup condition. 
\subsection{Inf--sup stability of the symmetric Taylor--Hood pair}
This subsection establishes the discrete inf--sup stability of the
symmetric Taylor--Hood pair. For each edge
\(e=\overline{a_i a_j}\in\mathcal E_h\), let
\[
    t_e:=\frac{a_i-a_j}{|a_i-a_j|}
\]
be a unit tangential vector along \(e\). Let \(\phi_i\) denote the
continuous piecewise linear nodal basis function associated with the
vertex \(a_i\), and define the corresponding edge-bubble function by
$
    b_e:=\phi_i\phi_j,
$
then \(\operatorname{supp} b_e=\overline{\omega}_e\).
\cite{HuZhang14,HZ3D} shows that for every \(d\)-simplex \(T\), the rank-one tensors associated with its
edges satisfy 
\[
    \mathbb S^d
    =\operatorname{span}\{t_e t_e^{\top}:e\subset\partial T\},~d=2,3.
\]
A standard scaling argument, combined with the shape-regularity assumption, implies the following norm equivalent lemma. 
\begin{lemma}
\label{lem:edge-direction-equivalence}
For every fixed integer $m\geq0$, there exist constants $c_*,C_*>0$ such
that
\begin{equation}
 c_*\|\mathcal E\|_{0,T}^2
 \leq
 \sum_{e\subset T}\int_T b_e\bigl(t_e^T\mathcal Et_e\bigr)^2\,dx
 \leq
 C_*\|\mathcal E\|_{0,T}^2
 \qquad
 \forall \mathcal E\in P_m(T;\mathbb S^d),
\label{eq:uniform-edge-norm}
\end{equation}
for every $T\in\mathcal T_h$. The constants depend only on $m$ and the
shape-regularity parameter of $\{\mathcal T_h\}_h$, and are independent of
$T$ and $h_T$.
\end{lemma}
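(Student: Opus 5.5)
We prove the estimate on a fixed reference simplex \(\hat T\) and transfer it to \(T\) by an affine map, using shape regularity to control the constants. Write \(F_T(\hat x)=B_T\hat x+a_0\), with \(|\det B_T|\simeq h_T^d\), \(\|B_T\|\lesssim h_T\) and \(\|B_T^{-1}\|\lesssim h_T^{-1}\).

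\emph{Step 1: the reference element.} Both sides of \eqref{eq:uniform-edge-norm} are nonnegative quadratic forms on the finite-dimensional space \(P_m(T;\mathbb S^d)\). The upper bound is immediate, since \(0\le b_e\le 1\) and \(|t_e|=1\) give \((t_e^T\mathcal E t_e)^2\le |\mathcal E|^2\) pointwise. For the lower bound we check that the middle quantity is a norm. Suppose it vanishes. Each \(b_e\) is positive in the interior of \(T\), so \(t_e^T\mathcal E(x)t_e=0\) for every edge \(e\) and every interior \(x\); by continuity this holds on all of \(T\). The spanning property \(\mathbb S^d=\operatorname{span}\{t_et_e^T\}\) from \cite{HuZhang14,HZ3D} means the functionals \(S\mapsto t_e^TSt_e=S:t_et_e^T\) separate points of \(\mathbb S^d\). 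Hence \(\mathcal E(x)=0\) pointwise, so \(\mathcal E=0\). Equivalence of norms on a finite-dimensional space then yields constants \(c(T),C(T)\).

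\emph{Step 2: uniformity (the main obstacle).} The difficulty is that the constant from Step 1 depends on \(T\) through the tangents \(t_e\), so we must show it does not degenerate as \(T\) varies. Pulling back, \(b_e\circ F_T=\hat b_{\hat e}\) exactly, since barycentric coordinates are affine invariant. Also \(t_e=B_T\hat t_{\hat e}/|B_T\hat t_{\hat e}|\), and both \(\int_T\) and \(\|\cdot\|_{0,T}^2\) scale by the same factor \(|\det B_T|\). So, writing \(\hat{\mathcal E}=\mathcal E\circ F_T\), the ratio of the two sides becomes
\[
\frac{\sum_{\hat e}\int_{\hat T}\hat b_{\hat e}\,(\hat t_{\hat e}^TB_T^T\hat{\mathcal E}B_T\hat t_{\hat e})^2/|B_T\hat t_{\hat e}|^4\,d\hat x}{\int_{\hat T}|\hat{\mathcal E}|^2\,d\hat x}.
\]
Rescale \(B_T=h_T\tilde B\), which cancels the \(h_T^4\) factors. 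By shape regularity, \(\tilde B\) ranges over a compact set \(K\) of invertible matrices. The ratio is then a continuous function of \((\tilde B,\hat{\mathcal E})\in K\times\{\hat{\mathcal E}\in P_m(\hat T;\mathbb S^d):\|\hat{\mathcal E}\|_{0,\hat T}=1\}\), and this domain is compact. Step 1, applied to the simplex \(\tilde B\hat T\) (whose edge directions still span \(\mathbb S^d\)), shows the ratio is positive everywhere on this set. Its minimum \(c_*>0\) and maximum \(C_*\) therefore depend only on \(m\), \(d\) and the shape-regularity constant. This proves \eqref{eq:uniform-edge-norm}.
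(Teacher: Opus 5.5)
Your proof is correct and follows the route the paper indicates. The paper only asserts that the lemma follows from a standard scaling argument together with shape regularity; your pull-back to $\hat T$, the non-degeneracy via the spanning property $\mathbb S^d=\operatorname{span}\{t_et_e^T\}$, and the compactness argument over the shape-regular set of normalized Jacobians $\tilde B$ (which controls how the tangents $t_e$ depend on $T$) are exactly the details that sentence leaves implicit.
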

The key observation is that, for any \(q_h\in Q_{k-1}\), \(h_T\epsilon(q_h)|_T\) can be controlled by
tensor-valued test functions supported on the edge patches
\(\omega_e\), with \(e\subset\partial T\).
Figure~\ref{fig:overlapping-edge-patches} illustrates the overlapping
edge-patch construction in two dimensions. And (a) of Figure~\ref{fig:overlapping-edge-patches} shows the interior element. 
For a boundary element, as shown in (b) of Figure~\ref{fig:overlapping-edge-patches}, some of these patches are truncated by
\(\partial\Omega\) and may therefore be one-sided. The red, blue, and
yellow regions represent the three edge patches, while
\(t_1,t_2,t_3\) denote the corresponding unit tangential directions.
\begin{figure}[htbp]
  \centering
  \includegraphics[width=0.88\linewidth]{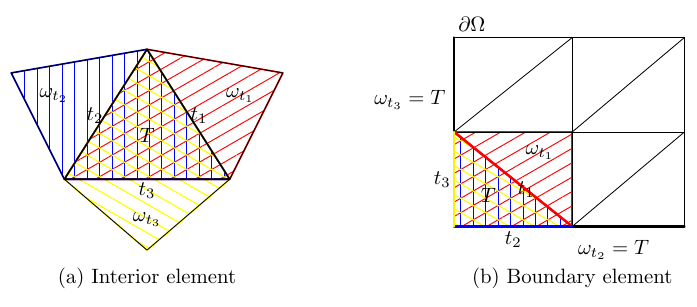}
  \caption{Overlapping edge patches associated with an interior element
(left) and a boundary element (right).}
  \label{fig:overlapping-edge-patches}
\end{figure}
\begin{lemma}[Edge-bubble control]
\label{lem:edge-bubble-strain-control}
There exists a constant $C>0$, depending only on $k$ and the
shape-regularity of $\mathcal T_h$, such that
\begin{equation}
 \|h\varepsilon(q_h)\|_{0}
 \leq C
 \sup_{0\neq\tau_h\in\Sigma_k}
 \frac{b(\tau_h,q_h)}{\|\tau_h\|_{1}}
 \qquad \forall q_h\in Q_{k-1}.
\label{eq:edge-bubble-strain-control}
\end{equation}
\end{lemma}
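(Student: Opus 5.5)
We construct one global test function $\tau_h\in\Sigma_k$ directly from $q_h$, built out of edge bubbles. Since $q_h$ is continuous and piecewise $P_{k-1}$, the strain $\varepsilon(q_h)$ is piecewise $P_{k-2}$, symmetric, and discontinuous across faces. The natural first candidate is
\[
 \tau_h:=-\sum_{e\in\mathcal E_h} h_e^2\, b_e\,\beta_e\, t_et_e^{\top},
\]
where $\beta_e$ is a scalar polynomial on $\omega_e$ built from $t_e^{\top}\varepsilon(q_h)t_e$. Since $b_e=\phi_i\phi_j$ is quadratic, $\beta_e$ must have degree at most $k-2$ for $\tau_h\in P_k$.

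The difficulty is that $t_e^{\top}\varepsilon(q_h)t_e$ is discontinuous across $\omega_e$, so $b_e$ times it is not $H^1$. The key fact is that $t_e^{\top}\varepsilon(q_h)t_e=\partial_{t_e}(q_h\cdot t_e)$ is a tangential derivative along $e$, which is \emph{continuous along $e$} because $q_h$ is continuous. This is not enough for continuity on the whole patch, however. So I would define $\beta_e$ as follows. Let $g_e:=\partial_{t_e}(q_h\cdot t_e)|_e\in P_{k-2}(e)$, which is single-valued. Extend $g_e$ to $\omega_e$ by the canonical polynomial extension that is constant along the barycentric directions transverse to $e$. That is, write $g_e$ as a polynomial in $\phi_i,\phi_j$ restricted to $e$, namely $g_e=p(\phi_i,\phi_j)$ homogeneous of degree $k-2$ with $\phi_i+\phi_j=1$, and set $\beta_e:=p(\phi_i,\phi_j)$ on each $T\subset\omega_e$. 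Because $\phi_i,\phi_j$ are globally continuous, this extension is continuous, and $b_e\beta_e t_et_e^{\top}\in\Sigma_k$.

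Next we integrate by parts:
\[
 b(\tau_h,q_h)=-(\tau_h,\varepsilon(q_h)) = \sum_e h_e^2\int_{\omega_e} b_e\,\beta_e\, t_e^{\top}\varepsilon(q_h)t_e\,dx .
\]
No boundary terms arise because $q_h\in H^1$ and $\tau_h$ carries no boundary conditions; this is exactly the point emphasized in the introduction, and it is why boundary edges are admissible. The task is then to show this is $\gtrsim\|h\varepsilon(q_h)\|_0^2$. On each $T$, I would compare $t_e^{\top}\varepsilon(q_h)t_e$ with $\beta_e$. On $T$, $\varepsilon(q_h)\in P_{k-2}$ and $\beta_e$ agrees with it on $e$, but not in the interior. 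This comparison is the main obstacle: the naive integrand is not a square.

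The way around it that I would try first is to enlarge the test space on each $T$ by element-interior bubbles. Take $\tau_h=\tau_h^{E}+\tau_h^{T}$, with $\tau_h^{T}|_T=-h_T^2 b_T\varepsilon(q_h)$ and $b_T$ the element bubble. This requires degree $d+1+k-2\le k$, which fails in general, so instead I would use edge bubbles with $\beta_e$ ranging over all of $P_{k-2}$ restricted to the trace structure. The decisive alternative is a finite-dimensional argument. On a reference patch, the map $q_h\mapsto\sup_{\tau}$ over $\tau\in\operatorname{span}\{b_e\beta\, t_et_e^{\top}\}$ is a seminorm. Its kernel consists of $q_h$ with $\varepsilon(q_h)$ orthogonal to all such $\tau$. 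By Lemma~\ref{lem:edge-direction-equivalence} and the spanning property of $\{t_et_e^{\top}\}$, the kernel forces $\varepsilon(q_h)=0$ on each element, i.e. $q_h$ is a rigid motion. The weighted norm on the right of \eqref{eq:uniform-edge-norm} then gives local equivalence with constants depending only on $k$ and shape regularity, by compactness of reference configurations.

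Finally we scale back. Scaling gives $\|\tau_h\|_{1,\omega_e}\lesssim h\|\varepsilon(q_h)\|_{0,\omega_e}$ via inverse estimates. Summing over the finitely overlapping patches, we choose $\tau_h$ as the sum of the local maximizers, each weighted by $h^2$. This yields $b(\tau_h,q_h)\gtrsim\|h\varepsilon(q_h)\|_0^2$ and $\|\tau_h\|_1\lesssim\|h\varepsilon(q_h)\|_0$, which proves \eqref{eq:edge-bubble-strain-control}.
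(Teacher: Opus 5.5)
Your proof has a genuine gap, and it comes from a false claim early on. You assert that $\theta_e:=t_e^{\top}\varepsilon(q_h)t_e$ is discontinuous on $\omega_e$, and that continuity of $\partial_{t_e}(q_h\cdot t_e)$ along $e$ ``is not enough for continuity on the whole patch.'' In fact $\theta_e$ is continuous on all of $\omega_e$. Every element of $\omega_e$ contains $e$, so any two face-adjacent elements of the patch meet in a face $F\supset e$; in 2D, $F=e$ itself is the only interface of the patch. Hence $t_e$ is tangent to $F$. Continuity of $q_h$ across $F$ then gives $(\nabla q_h|_{T^+}-\nabla q_h|_{T^-})t_e=0$ on $F$, so $t_e^{\top}\nabla q_h\,t_e=\theta_e$ is continuous across $F$.

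This is the key observation of the paper's proof. With it, $\tau_e:=-h_e^2b_e\theta_e\,t_et_e^{\top}$ lies in $\Sigma_k$, since $\theta_e|_T\in P_{k-2}$ and $b_e\in P_2$. Then $b(\tau_h,q_h)=\sum_e h_e^2\int_{\omega_e}b_e\theta_e^2\,dx$ is a sum of squares. Lemma~\ref{lem:edge-direction-equivalence} bounds it below by $c\|h\varepsilon(q_h)\|_0^2$, and an inverse estimate gives $\|\tau_h\|_1\lesssim\|h\varepsilon(q_h)\|_0$. Your extension $\beta_e$ of the trace $g_e$, which makes the integrand a non-square, and the element-bubble attempt are detours forced by missing this fact.

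Your fallback compactness argument does not close the gap, because its kernel step silently uses the very fact you denied. You want to conclude from $\int_{\omega_e}b_e\beta\,\theta_e\,dx=0$ for all admissible continuous $\beta$ that $\varepsilon(q_h)=0$, via Lemma~\ref{lem:edge-direction-equivalence}. That requires taking $\beta=\theta_e$. If $\theta_e$ really were discontinuous on $\omega_e$, orthogonality against continuous $\beta$ would not force $\theta_e=0$, and the lemma could not be invoked. So, as written, identifying the kernel with rigid motions is unjustified. Once it is justified, the direct construction above makes the compactness and patch-summation machinery unnecessary.

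There is also a secondary error in your integration by parts. The boundary term $\int_{\partial\Omega}(\tau_hn)\cdot q_h\,ds$ does not vanish ``because $\tau_h$ carries no boundary conditions''; that is precisely why such a term could appear. It vanishes because of the edge structure. On a boundary face $F$ of $\omega_e$, either $e\subset F$, so $t_e\cdot n_F=0$ and $\tau_en=0$; or $e\not\subset F$, so $\phi_i$ or $\phi_j$ vanishes on $F$ and $b_e|_F=0$.
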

\begin{proof}
For any \(q_h\in Q_{k-1}\), let \(E_h:=\varepsilon(q_h)\). For each
\(e\in\mathcal E_h\), define a function \(\theta_e\) on \(\omega_e\) by
\[
    \theta_e|_T:=t_e^T E_h|_T t_e,
    \qquad T\subset\omega_e.
\]
We claim that \(\theta_e\in C(\omega_e)\).

In two dimensions, the continuity of \(q_h\) implies
\[
    \bigl(\nabla q_h|_{T^+}-\nabla q_h|_{T^-}\bigr)t_e=0
    ~\text{on }e.
\]
Since \(t_e^T\nabla q_h\,t_e=t_e^T E_h t_e\), it follows that
$
    \jump{\theta_e}|_e=0.
$
In three dimensions, note that each interior face of $\omega_e$ contains \(e\), and hence \(t_e\) is tangential vector on
\(F\). Therefore, the continuity of \(q_h\) gives
\[
    \bigl(\nabla q_h|_{T^+}-\nabla q_h|_{T^-}\bigr)t_e=0
    \qquad\text{on }F,
\]
which implies
\(\theta_e\in C(\omega_e)\).
Define
\begin{equation}
 \tau_e:=-h_e^2b_e\theta_e\,t_et_e^T,
 \qquad
 \tau_h:=\sum_{e\in\mathcal E_h}\tau_e,
\label{eq:def-global-edge-test}
\end{equation}
Noting that $\theta_e|_T\in P_{k-2}(T)$, thus $\tau_h\in\Sigma_h^k$.
By using the fact, the support of $\tau_e$ on $\omega_e$, yields
\begin{align}
 b(\tau_h,q_h)
 &=\sum_e\int_{\omega_e}\mathrm{div}\tau_e q_h\mathrm{d}x  \
 =\sum_{e\in\mathcal E_h}h_e^2
   \int_{\omega_e}b_e\theta_e^2\,dx \notag\\
 &=\sum_{T\in\mathcal T_h}\sum_{e\subset T}h_e^2
   \int_Tb_e(t_e^TE_ht_e)^2\,dx.
\label{eq:positive-edge-sum}
\end{align}
For any boundary $(d-1)$-dimensional face \(F\subset\partial\omega_e\cap\partial\Omega\),
we have \(t_e\cdot n_F=0\) if \(e\subset F\), whereas \(b_e|_F=0\)
otherwise.  Then Lemma~\ref{lem:edge-direction-equivalence} leads that
\begin{equation}
 b(\tau_h,v_h)
 \geq c\sum_{T\in\mathcal T_h}h_T^2\|E_h\|_{0,T}^2
 =c\|hE_h\|_{0}^2.
\label{eq:edge-test-lower}
\end{equation}
Since $b_e$ is non-negative on $T$, inverse estimate implies
\begin{align*}
 \|\tau_h\|_{1,T}\leq \sum_{e\subset T}h_e^2
       \|b_e\theta_e\|_{1,T}\leq Ch_T\|E_h\|_{0,T}.
\end{align*}
Consequently,
$
 \|\tau_h\|_{1}
 \leq C\|hE_h\|_{0}.
$
Thus, we finish the proof.
\end{proof}
\begin{lemma}[Verf\"urth estimate]
\label{lem:verfurth-symmetric-div}
There exists a constant $C>0$, independent of $h$, such that
\begin{equation}
 \|q_h\|_{0}
 \leq C\left(
 \sup_{0\neq\tau_h\in\Sigma_h^k}
 \frac{b(\tau_h,q_h)}{\|\tau_h\|_{1}}
 +\|h\varepsilon(q_h)\|_{0}
 \right)
 \qquad \forall q_h\in Q_{k-1}.
\label{eq:verfurth-symmetric-div}
\end{equation}
\end{lemma}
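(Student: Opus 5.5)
Following Verf\"urth's argument for the Stokes Taylor--Hood pair, we first use the continuous inf--sup condition for the symmetric divergence on $H^1$, then apply a Cl\'ement-type quasi-interpolation, and absorb the remainder through an elementwise integration by parts.

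\textbf{Continuous inf--sup.} The stress space carries no essential boundary condition, so for every $q\in L^2(\Omega;\mathbb R^d)$ there is a symmetric $\tau\in H^1(\Omega;\mathbb S^d)$ with $\diver\tau=q$ and $\|\tau\|_1\le C\|q\|_0$. To construct it, extend $q$ by zero to a ball $B\supset\Omega$ and solve the pure-displacement elasticity problem $\diver\epsilon(w)=\tilde q$ in $B$, $w=0$ on $\partial B$. Then take $\tau=\epsilon(w)|_\Omega$. Full $H^2$-regularity on the ball gives $\|\tau\|_{1,\Omega}\le C\|q\|_0$. Apply this with $q=q_h$ to get such a $\tau$ for $q_h$.

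\textbf{Interpolation step.} Let $\Pi_h$ be a componentwise Scott--Zhang or Cl\'ement interpolant into continuous $P_k$ functions. Applied to each entry, it preserves symmetry, so $\Pi_h\tau\in\Sigma_k$. It also satisfies
\[
\|\Pi_h\tau\|_1\le C\|\tau\|_1,\qquad \Bigl(\sum_T h_T^{-2}\|\tau-\Pi_h\tau\|_{0,T}^2\Bigr)^{1/2}\le C\|\tau\|_1 .
\]
Now split
\[
\|q_h\|_0^2=(\diver\tau,q_h)=(\diver\Pi_h\tau,q_h)+(\diver(\tau-\Pi_h\tau),q_h).
\]
The first term is bounded by
\[
\sup_{\tau_h}\frac{b(\tau_h,q_h)}{\|\tau_h\|_1}\,C\|\tau\|_1\le C\sup_{\tau_h}\frac{b(\tau_h,q_h)}{\|\tau_h\|_1}\,\|q_h\|_0 .
\]

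\textbf{Integration by parts.} For the second term, integrate by parts on all of $\Omega$. This is legitimate because $q_h$ is continuous, so $q_h\in H^1$, and $\tau-\Pi_h\tau\in H^1$. The result is a boundary term $\langle(\tau-\Pi_h\tau)n,q_h\rangle_{\partial\Omega}$ minus $(\tau-\Pi_h\tau,\nabla q_h)$. Since $\tau-\Pi_h\tau$ is symmetric, the volume term equals $-(\tau-\Pi_h\tau,\epsilon(q_h))$. It is bounded by $C\|\tau\|_1\|h\epsilon(q_h)\|_0$ using the weighted interpolation estimate.

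\textbf{Main obstacle: the boundary term.} This is where the lack of essential boundary conditions causes difficulty. To avoid it, I would choose $\tau$ with $\tau n=0$ on $\partial\Omega$. Then I would use a Scott--Zhang interpolant whose boundary degrees of freedom are taken from boundary facets, so that $\Pi_h\tau$ has vanishing normal trace on $\partial\Omega$. Alternatively, I would use an interpolant whose boundary nodal values are set to zero, so that $\Pi_h\tau\in H^1_0$, with the same stability estimates since $\tau$ has zero trace.

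Zero normal trace already kills the boundary term. The simplest route is the zero-trace one, which requires a right inverse $\diver:H^1_0(\Omega;\mathbb S^d)\to L^2$. Such an inverse exists, with symmetric $H^1_0$ potentials, only for $q$ orthogonal to the rigid motions. So I would first subtract from $q_h$ its $L^2$-projection $r_h$ onto the rigid motions.

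The component $r_h$ must then be controlled separately. If $\Omega$ is Lipschitz, one can pick a fixed smooth symmetric $\tau_0\in\Sigma_k$ (piecewise linear, independent of $h$) with $(\diver\tau_0,r)\ge c\|r\|_0^2$ on the finite-dimensional space of rigid motions. This gives
\[
\|r_h\|_0\lesssim\sup_{\tau_h}\frac{b(\tau_h,q_h)}{\|\tau_h\|_1}+\|q_h-r_h\|_0 .
\]
Combining the three estimates and dividing by $\|q_h\|_0$ yields \eqref{eq:verfurth-symmetric-div}. I expect the boundary treatment and the rigid-motion component to be the delicate step.
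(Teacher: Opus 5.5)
Once you switch to the rigid-motion splitting, your argument is the paper's. You write $q_h=r_h+q_0$ with $r_h\in\mathrm{RM}$, lift $q_0$ by a symmetric $\tau\in H_0^1(\Omega;\mathbb S^d)$ (the paper's Lemma~\ref{lem:symmetric-divtau}), and apply a Scott--Zhang interpolant that preserves zero traces. You then integrate by parts against $\varepsilon(q_0)=\varepsilon(q_h)$. Your original splitting survives this change, because $(\operatorname{div}\tau,q_h)=\|q_0\|_0^2$, so the interpolated term is still bounded by the supremum. The ball-extension lifting from your first paragraph is never used.

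The step that fails as written is the control of $r_h$. No single $\tau_0$ can satisfy $(\operatorname{div}\tau_0,r)\ge c\|r\|_0^2$ for all $r\in\mathrm{RM}$; replacing $r$ by $-r$ gives a contradiction. What you need is a linear lifting $r\mapsto\tau(r)$ into a fixed, $h$-independent subspace of $\Sigma_k$.

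That subspace cannot consist of linear fields, and globally linear is the only $h$-independent reading of ``piecewise linear'' on a general, non-nested mesh family. A globally linear symmetric field has constant divergence. A constant vector is $L^2$-orthogonal to every rotation $B(x-\bar x)$ about the centroid $\bar x$ of $\Omega$, so rotations are invisible to such fields. You must use quadratics, and this is exactly where $k\ge2$ enters.

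For $r_h=a+Bx$, the paper takes
\[
 \rho_h=\frac{a\otimes x+x\otimes a}{d+1}+\frac{(Bx)\otimes x+x\otimes(Bx)}{d+2}\in P_2(\Omega;\mathbb S^d)\subset\Sigma_k .
\]
This satisfies $\operatorname{div}\rho_h=r_h$ exactly, and $\|\rho_h\|_1\lesssim\|r_h\|_0$ by finite dimensionality. By $L^2$-orthogonality $b(\rho_h,q_h)=\|r_h\|_0^2$, so the extra $\|q_h-r_h\|_0$ term in your bound disappears. With this replacement your proof goes through.
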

To prove Lemma \ref{lem:verfurth-symmetric-div}, we define the rigid body motion space
\[
 \mathrm{RM}:=
 \{r(x)=a+Bx:\ a\in\mathbb R^d,\ B^T=-B\}.
\]
For all $q_h\in Q_{k-1}$, we have the $L^2$-orthogonal decomposition
\begin{equation} 
q_h= r_h+q_0,~r_h\in  \mathrm{RM},~q_0\in \mathrm{RM}^{\perp}.
\label{eq:RM-decomp}
\end{equation}
The following Lemma is a direct consequence of the
negative-norm Korn inequality and the closed range theorem applied to
the symmetric divergence operator. \cite{PaulyZulehner23} gives the corresponding result in three dimensions.
\begin{lemma}
Let \(\Omega\) be a bounded connected Lipschitz domain. For every
\(q\in \mathrm{RM}^{\perp}\), there exists
\(\tau\in H_0^1(\Omega;\mathbb S^d)\) such that
\[
    \operatorname{div}\tau=q,
    \qquad
    \|\tau\|_{1}\lesssim \|q\|_{0}.
\]
\label{lem:symmetric-divtau}
\end{lemma}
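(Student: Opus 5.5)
The plan is to obtain the lemma from the closed range theorem applied to the symmetric divergence $\operatorname{div}: H_0^1(\Omega;\mathbb S^d)\to L^2(\Omega;\mathbb R^d)$. Its adjoint is, up to sign, $-\epsilon: L^2(\Omega;\mathbb R^d)\to H^{-1}(\Omega;\mathbb S^d)$, since for $v\in L^2$ and $\tau\in H_0^1$ we have $(\operatorname{div}\tau,v)=-\langle \epsilon(v),\tau\rangle$ in the distributional sense. Once we know that $\epsilon^*$ has closed range and kernel $\mathrm{RM}$, the range of $\operatorname{div}$ is closed and equals $\ker(\epsilon)^{\perp}=\mathrm{RM}^{\perp}$. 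A bounded right inverse with the stated estimate then follows from the open mapping theorem.

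\textbf{Step 1.} I first identify the kernel. If $v\in L^2$ and $\epsilon(v)=0$ in $H^{-1}$, then every second derivative of $v$ vanishes, because $\partial_j\partial_k v_i=\partial_j\epsilon_{ik}+\partial_k\epsilon_{ij}-\partial_i\epsilon_{jk}$ distributionally. Since $\Omega$ is connected, $v$ is affine with skew gradient, so $v\in\mathrm{RM}$.

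\textbf{Step 2.} This is the main analytic input: the negative-norm Korn inequality
\[
\|v\|_{0}\lesssim \|v\|_{-1}+\|\epsilon(v)\|_{-1},\qquad v\in L^2(\Omega;\mathbb R^d).
\]
It follows from the Ne\v{c}as inequality $\|w\|_0\lesssim\|w\|_{-1}+\|\nabla w\|_{-1}$ on Lipschitz domains, combined with the identity in Step 1. That identity bounds $\|\nabla v\|_{-1}$ by $\|\epsilon(v)\|_{-1}$ plus lower-order terms; more precisely, it bounds $\|\nabla\nabla v\|_{-2}$, and Ne\v{c}as is then applied to $\nabla v$ in $H^{-1}$. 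The compact embedding $L^2\hookrightarrow H^{-1}$ and the standard Peetre--Tartar argument, using the finite-dimensional kernel $\mathrm{RM}$, upgrade this to
\[
\|v\|_0\lesssim \|\epsilon(v)\|_{-1}=\sup_{0\ne\tau\in H_0^1(\Omega;\mathbb S^d)}\frac{(\operatorname{div}\tau,v)}{\|\tau\|_1},\qquad v\in\mathrm{RM}^\perp.
\]
Note that the $H^{-1}$ dual of symmetric tensor fields suffices, since $\epsilon(v)$ is symmetric.

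\textbf{Step 3.} The inequality of Step 2 is exactly the continuous inf--sup condition for the pair $(H_0^1(\Omega;\mathbb S^d),\mathrm{RM}^\perp)$. I also check that $\operatorname{div}\tau\in\mathrm{RM}^\perp$ for $\tau\in H_0^1$: integrating by parts gives $(\operatorname{div}\tau,r)=-(\tau,\epsilon(r))=0$. Hence, by the Babu\v{s}ka--Brezzi/closed range theorem, for each $q\in\mathrm{RM}^\perp$ there exists $\tau\in H_0^1(\Omega;\mathbb S^d)$ with $\operatorname{div}\tau=q$ and $\|\tau\|_1\lesssim\|q\|_0$. One can take $\tau$ to be the minimal-norm preimage in the orthogonal complement of the kernel of $\operatorname{div}$.

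The main obstacle is Step 2, namely the Ne\v{c}as-type negative-norm Korn inequality on a general Lipschitz domain. Here I would cite the literature: Ne\v{c}as's inequality, and \cite{PaulyZulehner23} for $d=3$. The remaining steps are functional-analytic routine.
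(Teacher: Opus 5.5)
Your proposal is correct and takes the same route the paper indicates. The paper gives no detailed proof; it only says the lemma follows from the negative-norm Korn inequality and the closed range theorem for the symmetric divergence, citing Pauly--Zulehner for $d=3$. Your Steps 1--3 (kernel equals $\mathrm{RM}$, negative-norm Korn via Ne\v{c}as plus a Peetre--Tartar compactness argument on $\mathrm{RM}^\perp$, then Babu\v{s}ka--Brezzi surjectivity onto $\mathrm{RM}^\perp$) fill in exactly those details. One point should be cited precisely: your derivation of the negative-norm Korn inequality uses the Ne\v{c}as inequality one level down, $\|w\|_{-1}\lesssim\|w\|_{-2}+\|\nabla w\|_{-2}$ on Lipschitz domains. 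Either cite that version, or cite the negative-norm Korn inequality directly.
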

\begin{proof}[Proof of Lemma \ref{lem:verfurth-symmetric-div}]
By Lemme \ref{lem:symmetric-divtau}, there exists $\tau_0\in H_0^1(\Omega;\mathbb S^d)$ such that
\[
\mathrm{div}\tau_0=q_0,~\|\tau_0\|_1\lesssim \|q_0\|_0,
\]
where $q_0$ is defined in \eqref{eq:RM-decomp}. Then let $I_h$ be the Scott-Zhang interpolation operator\cite{ScottZhang1990} from $H_0^1(\Omega;\mathbb S^d)\rightarrow H_0^1(\Omega;\mathbb S^d)\cap\Sigma_k $. Integration by parts implies 
\begin{align*}
 |(\operatorname{div}(\tau-I_h\tau),q_0)|
 &=|(\tau-I_h\tau,\varepsilon(q_0))|\\
 &\leq
 \|h^{-1}(\tau-I_h\tau)\|_{0}
 \|h\varepsilon(q_0)\|_{0}\lesssim\|\tau\|_{1}
 \|h\varepsilon(q_0)\|_{0}.
\end{align*}
For $r_h$, let
\[
 \rho_h:=
 \frac{a\otimes x+x\otimes a}{d+1}
 +
 \frac{(Bx)\otimes x+x\otimes(Bx)}{d+2}.
\]
A direct calculation gives
\[
 \rho_h\in P_2(\Omega;\mathbb S^d)\subset\Sigma_h^k,
 \quad
 \operatorname{div}\rho_h=r_h,
 \quad
 \|\rho_h\|_{1}\lesssim\|r_h\|_{0}.
\]
Let $\tau_h=I_h\tau_0+\rho_h,$ then
\begin{equation*}
    \frac{b(\tau_h,q_h)}{\|\tau_h\|_1}\gtrsim \frac{b(I_h\tau,q_0)+b(\rho_h,r_h)}{\|I_h\tau_0\|_1+\|\rho_h\|_1}\gtrsim\|q_h\|_0.
\end{equation*}
Thus, we finish the proof.
\end{proof}
Combing Lemma \ref{lem:edge-bubble-strain-control} and \ref{lem:verfurth-symmetric-div} we obtain the inf-sup stability. 
\begin{theorem}[Symmetric Taylor--Hood stability]
\label{thm:symmetric-taylor-hood}
For every $k\geq2$, there exists $\beta>0$, depending only on
$\Omega$, $k$, and the shape-regularity of $\mathcal{T}_h$, but independent of $h$, such
that
\begin{equation}
 \|q_h\|_{0}
 \leq \beta^{-1}
 \sup_{0\neq\tau_h\in\Sigma_h^k}
 \frac{(\operatorname{div}\tau_h,q_h)}
      {\|\tau_h\|_{1}}
 \qquad \forall q_h\in Q_{k-1}.
\label{eq:symmetric-th-infsup}
\end{equation}

\end{theorem}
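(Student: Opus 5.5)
The proof combines the two preceding lemmas through the classical Verfürth trick. Fix $q_h\in Q_{k-1}$ and abbreviate
\[
 S(q_h):=\sup_{0\neq\tau_h\in\Sigma_h^k}\frac{(\operatorname{div}\tau_h,q_h)}{\|\tau_h\|_{1}} .
\]
Lemma~\ref{lem:verfurth-symmetric-div} gives $\|q_h\|_0\le C\bigl(S(q_h)+\|h\varepsilon(q_h)\|_0\bigr)$. The second term is exactly what Lemma~\ref{lem:edge-bubble-strain-control} controls: $\|h\varepsilon(q_h)\|_0\le C\,S(q_h)$. Substituting yields $\|q_h\|_0\le C(1+C)S(q_h)$, so one may take $\beta^{-1}=C(1+C)$. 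Both constants depend only on $\Omega$, $k$ and shape regularity, and not on $h$. No convex-combination argument is needed, because the edge-bubble lemma bounds the mesh-dependent term directly by the supremum.

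Two points need checking along the way, mostly bookkeeping so that the constants really are $h$-independent.

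\textbf{First}, the test space in Lemma~\ref{lem:edge-bubble-strain-control} must lie in $\Sigma_k=\Sigma_h^k$. Here $b_e\theta_e$ has degree $2+(k-2)=k$, which requires $k\ge 2$; this is why the hypothesis $k\ge2$ enters. Continuity across faces of $\omega_e$ and vanishing on $\partial\omega_e$ come from $b_e$, so no boundary conditions on $\Sigma_k$ are violated. In particular, boundary-edge bubbles are admissible since $\Sigma_k$ carries no essential condition.

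\textbf{Second}, in the Verfürth lemma the Scott--Zhang operator and the rigid-motion lift $\rho_h\in P_2\subset\Sigma_h^k$ (again using $k\ge2$) must combine correctly. The cross terms $b(I_h\tau_0,r_h)$ and $b(\rho_h,q_0)$ have to vanish or be absorbed.
\begin{itemize}
\item $b(I_h\tau_0,r_h)=-(I_h\tau_0,\varepsilon(r_h))=0$, since $I_h\tau_0$ has zero trace and $\varepsilon(r_h)=0$.
\item $b(\rho_h,q_0)=(r_h,q_0)=0$ by the $L^2$-orthogonality in \eqref{eq:RM-decomp}.
\end{itemize}
Hence $b(\tau_h,q_h)=\|q_0\|_0^2+\|r_h\|_0^2-(\operatorname{div}(\tau_0-I_h\tau_0),q_0)$. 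Dividing by $\|\tau_h\|_1\lesssim\|q_h\|_0$ gives the lemma with the $\|h\varepsilon(q_0)\|_0$ remainder. Note $\varepsilon(q_0)=\varepsilon(q_h)$, since $\varepsilon(r_h)=0$.

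I expect the main obstacle to be the edge-bubble lower bound \eqref{eq:edge-test-lower}. There are two issues.
\begin{itemize}
\item \emph{Integration by parts.} Moving the divergence onto $q_h$ must produce no boundary terms: interior faces cancel by continuity, and on $\partial\Omega$ either $b_e=0$ or $t_e\cdot n=0$.
\item \emph{Norm equivalence.} Lemma~\ref{lem:edge-direction-equivalence} must apply on every element, including boundary elements whose edge patches are one-sided. Here I would stress that the equivalence is elementwise and uses only $\mathbb S^d=\operatorname{span}\{t_et_e^T\}$ on each simplex. Since every edge of $T$ contributes a patch containing $T$, all directions are available on every element, and $h_e\simeq h_T$ by shape regularity.
\end{itemize}
Once this is in place, the theorem follows from the two-line combination in the first paragraph.
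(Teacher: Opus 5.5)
Your proposal is correct and is exactly the paper's argument: the theorem follows by inserting the edge-bubble bound $\|h\varepsilon(q_h)\|_0\le C\,S(q_h)$ into the Verf\"urth estimate, and your side checks (vanishing cross terms, $\varepsilon(q_0)=\varepsilon(q_h)$, $k\ge2$ for $\rho_h$ and $b_e\theta_e$) agree with the paper's lemma proofs. One small correction: the continuity of $b_e\theta_e$ across the interior faces of $\omega_e$ comes from the continuity of $\theta_e$, i.e.\ from the continuity of the tangential derivatives of $q_h$ along faces containing $e$; $b_e$ only supplies the vanishing on $\partial\omega_e$.
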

\begin{theorem}[A priori error estimate]
\label{thm:symmetric-TH-error}
For $k\geq 2$, there exists an unique solution pair $(\sigma_h,u_h)\in \Sigma_k\times Q_{k-1}$ of problem \eqref{lsg-THdiscrete}. Let $(\sigma,u)\in\Sigma\times Q$ and
 be the solution of problem \eqref{lsg:s-weak}, then
\begin{equation}
 \|\sigma-\sigma_h\|_{\iota}
 +\|u-u_h\|_{0}
 \lesssim
 \inf_{\tau_h\in\Sigma_h^k}
 \|\sigma-\tau_h\|_{\iota}
 +
 \inf_{q_h\in Q_{k-1}}
 \|u-q_h\|_{0}.
\label{eq:symmetric-TH-quasi-optimal}
\end{equation}
Moreover, if
$
 \sigma\in H^{s+1}(\Omega;\mathbb S^d),
~u\in H^s(\Omega;\mathbb R^d),~ 1\leq s\leq k,
$
then
\begin{equation}
 \|\sigma-\sigma_h\|_{1}
 +\|u-u_h\|_{0}
 \lesssim
 h^s\left(
 |\sigma|_{s+1}
 +|u|_{s}
 \right).
\label{eq:symmetric-TH-optimal-error}
\end{equation}

\end{theorem}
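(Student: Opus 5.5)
The plan is to verify the two Brezzi conditions for the saddle-point problem \eqref{lsg-THdiscrete} in the norms $\|\cdot\|_\iota$ on $\Sigma_k$ and $\|\cdot\|_0$ on $Q_{k-1}$, uniformly in $h$, $\iota$ and $\lambda$. Existence, uniqueness and the quasi-optimal bound \eqref{eq:symmetric-TH-quasi-optimal} then follow from standard Babu\v{s}ka--Brezzi theory together with Galerkin orthogonality. Finally, \eqref{eq:symmetric-TH-optimal-error} will come from interpolation estimates.

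\textbf{Step 1 (coercivity).} For $\tau_h\in\Sigma_k$ we have $a^S(\tau_h,\tau_h)=a_\iota(\tau_h,\tau_h)+\|\mathrm{div}\tau_h\|_0^2=\|\tau_h\|_\iota^2$. This holds on all of $\Sigma_k$, since $\mathcal A$ is symmetric positive semidefinite. Uniform positivity with respect to $\lambda$ is the delicate point: $(\mathcal A\tau,\tau)$ controls only the deviatoric part uniformly. Since the augmented term $(\mathrm{div}\cdot,\mathrm{div}\cdot)$ is included, I will use the standard bound $\|\tau\|_0\lesssim\|\tau^D\|_0+\|\mathrm{div}\tau\|_{-1}$ modulo constants. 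With pure displacement boundary data the trace mean is fixed by the solution, as in classical elasticity. Where needed, I will simply accept a possible $\lambda$-dependence, or work in $\|\cdot\|_\iota$ as defined, which coincides with $a^S$.

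\textbf{Step 2 (inf-sup in the $\iota$-norm).} Theorem~\ref{thm:symmetric-taylor-hood} gives inf-sup with $\|\tau_h\|_1$ in the denominator. Since $0<\iota\le1$ and $\mathcal A$ is bounded uniformly in $\lambda$ (its operator norm is at most $1/(2\mu)$), we have $\|\tau_h\|_\iota\lesssim\|\tau_h\|_1$. Hence the same supremizer yields $\|q_h\|_0\lesssim\sup_{\tau_h}b(\tau_h,q_h)/\|\tau_h\|_\iota$.

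Boundedness of $a^S$ and $b$ in these norms is immediate. The discrete problem is therefore well posed with constants independent of $h,\iota,\lambda$. For the error, consistency of \eqref{lsg-THdiscrete} with the continuous problem is needed: because $\mathrm{div}\sigma=f$, the added term $(\mathrm{div}\sigma,\mathrm{div}\tau_h)=(f,\mathrm{div}\tau_h)$ and the method is consistent. The usual argument, bounding the discrete error by stability applied to $(\sigma_h-\tau_h,u_h-q_h)$ and then using the triangle inequality, gives \eqref{eq:symmetric-TH-quasi-optimal}.

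\textbf{Step 3 (rates).} I will choose $\tau_h=I_h\sigma$, a Scott--Zhang or Lagrange interpolant into $\Sigma_k$, with $\|\sigma-I_h\sigma\|_1\lesssim h^s|\sigma|_{s+1}$ for $s\le k$. This bounds $\|\sigma-\tau_h\|_\iota\lesssim\|\sigma-\tau_h\|_1$. For $u$, the $L^2$ projection onto continuous $P_{k-1}$ gives $h^s|u|_s$ for $s\le k$, using $k-1\ge s-1$. The $H^1$ norm on the left is bounded by the $\iota$-norm only up to a factor $\iota^{-1}$. I expect this to be the main obstacle: the left side uses $\|\cdot\|_1$ rather than $\|\cdot\|_\iota$. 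I would first try to read the estimate with $\|\cdot\|_\iota$ on the left, or with a constant depending on $\iota$. Otherwise I would bound $|\sigma-\sigma_h|_1$ by $\iota^{-1}\|\cdot\|_\iota$ and absorb the factor into the hidden constant, noting the resulting $\iota$-dependence.
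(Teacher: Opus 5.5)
Your proposal is correct and matches the argument the paper leaves implicit (no separate proof is written): coercivity holds with constant one since $a^S(\tau_h,\tau_h)=\|\tau_h\|_\iota^2$, the inf--sup condition is Theorem~\ref{thm:symmetric-taylor-hood} together with $\|\tau_h\|_\iota\lesssim\|\tau_h\|_1$, and Brezzi theory plus interpolation gives the rates, so the $\lambda$-robustness discussion in your Step~1 is not needed. Your caveat about the $H^1$ bound is also right: it follows from the $\|\cdot\|_\iota$ bound only with a factor $\iota^{-1}$, and that factor is in fact $\lambda$-uniform, because $\nabla\mathrm{tr}\,\tau=d\,(\mathrm{div}\,\tau-\mathrm{div}\,\tau^D)$ lets the divergence term control the spherical part of $|\tau|_1$, while $\int_\Omega\mathrm{tr}(\sigma-\sigma_h)\,dx=0$ (test both problems with the constant tensor $I$) gives the $L^2$ part via Poincar\'e.
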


\begin{remark}[Compatibility at a criss-cross vertex]
For the matrix-valued Falk--Neilan pair considered in
\cite{LTLSG}, a criss-cross vertex $z$ gives rise to the compatibility
condition
\begin{equation}
    J_z:\nabla q_1(z)+J_z:\nabla q_3(z)
    =
    J_z:\nabla q_2(z)+J_z:\nabla q_4(z),
    \label{eq:hm-cond}
\end{equation}
where $q_i:=q|_{T_i}$, and $J_{z,T}$ is a matrix depend on $T$, more detialed can be found in \cite{LTLSG}.

The condition \eqref{eq:hm-cond} is automatically satisfied
by the continuous Taylor--Hood displacement space. Indeed, let
$G_i:=\nabla q_i(z)$. Continuity of $q$ across the four edges meeting
at $z$ implies continuity of its tangential derivatives and hence
\[
\begin{aligned}
    (G_1-G_2)t_1&=0, & (G_3-G_4)t_1&=0,\\
    (G_2-G_3)t_2&=0, & (G_4-G_1)t_2&=0.
\end{aligned}
\]
It follows that
\[
    (G_1-G_2+G_3-G_4)t_1
    =
    (G_1-G_2+G_3-G_4)t_2=0.
\]
Since $t_1$ and $t_2$ are linearly independent,
\[
    G_1+G_3=G_2+G_4,
\]
which immediately yields \eqref{eq:hm-cond}. Thus, the compatibility
condition imposed on the Falk--Neilan pair at a criss-cross vertex
does not impose any additional restriction on the symmetric
CG--CG pair. 
\end{remark}

\section{Nitsche technical and the lowest order "Taylor--Hood" method}
In this section, we discretize the problem \eqref{lsg-weak} using a stabilized mixed method for the lowest order case, i.e., $k=1$ for $\Sigma_k$. In this case, $Q_{k-1}$ becomes piecewise constant space. Define 
\begin{equation*}
    \Sigma_h = \{\sigma\in H^1(\Omega;\mathbb S^d)|~\sigma|_T\in P_1(T;\mathbb S^d) \},~Q_{h}:=\{q\in H^1(\Omega;\mathbb R^d)|~q|_T\in P_{0}(T;\mathbb R^d)\},
\end{equation*}
 Moreover, we consider the discrete method for more general boundary conditions. Let $\Fh=\Fh^{i}\cup\Fh^{\partial}$ denote the set of all $(d-1)$-dimensional faces of $\mathcal{ T}_h$, where $\Fh^{i}$ and $\Fh^{\partial}$ denote the sets of interior and boundary faces, respectively. Let $\mathcal F_h^{\partial}= \Gamma_f\cup \Gamma_c,$ and we assume $\Gamma_c\neq \emptyset.$ Otherwise, there no displacement boundary condition to eliminate the rigid body motion. 
Note that $\sigma_h$ is not a subspace of $\Sigma_f$ if $\Gamma_f\neq \emptyset$. Therefore, the normal stress boundary condition \(\sigma n=g_f\) on \(\Gamma_f\) is imposed weakly by Nitsche's method \cite{Nitsche1971,Nitsche2012,Nitsche2021}. For notational simplicity, the symbols $\Gamma_f$ and $\Gamma_c$ are
also used for the corresponding unions of mesh boundary faces.
Define
\[
 a_\iota(\sigma,\tau)
 :=
 \iota^2(\nabla\mathcal A\sigma,\nabla\tau)
 +(\mathcal A\sigma,\tau),
\]
\[
 b_h^N(\tau,v)
 :=
 (\diver\tau,v)
 -\langle \tau n,v\rangle_{\Gamma_f},
\]
and the stabilized term is defined as 
\[
 c_h(w,v):=c_h^i(w,v)+c_h^c(w,v),
\]
where
\[
 c_h^i(w,v)
 :=
 \sum_{F\in\Fh^i}h_F
 \int_F\jump{w}\cdot\jump{v}\,\mathrm ds,
 \qquad
 c_h^c(w,v)
 :=
 \sum_{F\subset\Gamma_c}h_F
 \int_F w\cdot v\,\mathrm ds.
\]
The Nitsche's penalty is defined by
\[
 s_h^N(\sigma,\tau)
 :=
 \sum_{F\subset\Gamma_f}\frac{\gamma_f}{h_F}
 \int_F(\sigma n)\cdot(\tau n)\,\mathrm ds.
\]
The boundary functional is
\[
\begin{aligned}
 \mathcal F_h^N(g_f,g_c,g_\sigma;\tau_h,v_h)
 :={}&
 -\langle g_f,v_h\rangle_{\Gamma_f}
 +\langle g_c,\tau_h n\rangle_{\Gamma_c}
 +\iota^2\langle \mathcal A g_\sigma,\tau_h\rangle_{\partial\Omega}
 \\
 &+s_h^N(g_f,\tau_h)-c_h^c(g_c,v_h).
\end{aligned}
\]
The Nitsche-stabilized method seeks
$(\sigma_h,u_h)\in\Sh\times\Uh$ such that
\begin{equation}\label{lsg-nitsche}
\begin{cases} 
 &a_\iota(\sigma_h,\tau_h)+b_h^{N}(\tau_h,u_h)=\mathcal{F}_h^N(g_f,g_c,g_\sigma;\tau_h,v_h)
 \qquad\forall\tau_h\in\Sh,\\
 &b_h^N(\sigma_h,v_h)-c^N_h(u_h,v_h)+s_h^N(u_h,v_h)=\ip{f}{v_h}
 \qquad\forall v_h\in\Uh.
\end{cases} 
\end{equation}
Also we can denote it as
\begin{equation}
 B_h^N(\sigma_h,u_h;\tau_h,v_h)
 =
 (f,v_h)+\mathcal F_h^N(g_f,g_c,g_\sigma;\tau_h,v_h)
\end{equation}
for all $(\tau_h,v_h)\in\Sh\times\Uh$, where
\[
\begin{aligned}
 B_h^N(\sigma_h,u_h;\tau_h,v_h)
 :={}&
 a_\iota(\sigma_h,\tau_h)
 +b_h^N(\tau_h,u_h)
 +b_h^N(\sigma_h,v_h)
 \\
 &-c_h(u_h,v_h)+s_h^N(\sigma_h,\tau_h).
\end{aligned}
\]
The associated norms are
\[
 \|\tau_h\|_{\Sigma,\iota}^2
 :=
 a_\iota(\tau_h,\tau_h)
 +\|\diver\tau_h\|_0^2
 +s_h^N(\tau_h,\tau_h),
 \qquad
 \|v_h\|_J^2
 :=
 \|v_h\|_0^2+c_h(v_h,v_h).
\]

In particular, when $g_c=g_\sigma=0$ and $\Gamma_f=\emptyset$,
problem \eqref{lsg-nitsche} reduces to
\begin{equation}\label{lsg:s-weak}
\begin{cases}
 a_\iota(\sigma_h,\tau_h)+(\diver\tau_h,u_h)=0,
 &\forall\tau_h\in\Sh,\\
 (\diver\sigma_h,v_h)-c_h(u_h,v_h)=(f,v_h),
 &\forall v_h\in\Uh.
\end{cases}
\end{equation}
For $\iota=0$, this coincides with the stabilized mixed finite element
method for linear elasticity in \cite{HZlowest}. 
\subsection{Discrete inf--sup stability}
\begin{lemma}[Discrete inf--sup condition for the Nitsche scheme]
\label{lem:infsup-nitsche}
Assume that $\Gamma_c\neq\emptyset$. Then, for every
$(\sigma_h,u_h)\in\Sh\times\Uh$, there holds
\begin{equation}\label{eq:infsup-nitsche}
 \norm{\sigma_h}_{\Sigma,\iota}+\norm{u_h}_J
 \lesssim
 \sup_{(\tau_h,v_h)\in\Sh\times\Uh\setminus\{(0,0)\}}
 \frac{B^N_h(\sigma_h,u_h;\tau_h,v_h)}
      {\norm{\tau_h}_{\Sigma,\iota}+\norm{v_h}_J}.
\end{equation}
\end{lemma}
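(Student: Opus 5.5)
We prove the bound by combining a diagonal test that yields every norm component except $\|\diver\sigma_h\|_0$ and $\|u_h\|_0$, and then recovering these two pieces with two further test functions. Throughout we read $B_h^N$ as
\[
 a_\iota(\sigma_h,\tau_h)+b_h^N(\tau_h,u_h)+b_h^N(\sigma_h,v_h)-c_h(u_h,v_h)+s_h^N(\sigma_h,\tau_h),
\]
and we use the antisymmetric sign choice $v_h=-u_h$.

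\textbf{Step 1: diagonal test.} Taking $(\tau_h,v_h)=(\sigma_h,-u_h)$ cancels the $b_h^N$ terms and gives
\[
 B_h^N(\sigma_h,u_h;\sigma_h,-u_h)=a_\iota(\sigma_h,\sigma_h)+s_h^N(\sigma_h,\sigma_h)+c_h(u_h,u_h).
\]
Here $a_\iota(\tau,\tau)\gtrsim\|\tau\|_0^2+\iota^2|\tau|_1^2$, using the usual $\lambda$-uniform control of $\mathcal A$, or deviatoric control if that is what the norm actually requires.

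\textbf{Step 2: recovering $\diver\sigma_h$.} Since $\sigma_h\in P_1$ elementwise, $\diver\sigma_h$ is piecewise constant, so $v_h=\diver\sigma_h\in\Uh$. With this test,
\[
 B_h^N(\sigma_h,u_h;0,\diver\sigma_h)=\|\diver\sigma_h\|_0^2-\langle\sigma_hn,\diver\sigma_h\rangle_{\Gamma_f}-c_h(u_h,\diver\sigma_h).
\]
We bound the boundary term by
\[
 s_h^N(\sigma_h,\sigma_h)^{1/2}\,\bigl(\textstyle\sum h_F\|\diver\sigma_h\|_{0,F}^2\bigr)^{1/2}\lesssim s_h^N(\sigma_h,\sigma_h)^{1/2}\|\diver\sigma_h\|_0,
\]
using a discrete trace inequality. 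We bound $c_h(u_h,\diver\sigma_h)$ by $c_h(u_h,u_h)^{1/2}\|\diver\sigma_h\|_0$ in the same way. Young's inequality then controls $\|\diver\sigma_h\|_0^2$ by the Step 1 quantities.

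\textbf{Step 3: recovering $\|u_h\|_0$ (the main obstacle).} This requires a stabilized inf--sup condition: for every $u_h\in\Uh$ there is $\tau_h\in\Sh$ with
\[
 b_h^N(\tau_h,u_h)\ge c_1\|u_h\|_0^2-C_2\,c_h(u_h,u_h)^{1/2}\|u_h\|_0,\qquad \|\tau_h\|_{\Sigma,\iota}\lesssim\|u_h\|_0 .
\]
\begin{enumerate}[label=(\roman*)]
\item Since $\Gamma_c\neq\emptyset$, there is $\tau\in H^1(\Omega;\mathbb S^d)$ with $\diver\tau=u_h$, $\tau n=0$ on $\Gamma_f$, and $\|\tau\|_1\lesssim\|u_h\|_0$. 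It can be obtained from an elasticity (Korn) problem with displacement fixed on $\Gamma_c$, or via Lemma~\ref{lem:symmetric-divtau} after extending $\Omega$ across $\Gamma_c$. The condition $\Gamma_c\neq\emptyset$ is what removes the rigid-body obstruction.
\item Set $\tau_h=I_h\tau$, where $I_h$ is the Scott--Zhang interpolant preserving $\tau n=0$ on $\Gamma_f$. Then $s_h^N(\tau_h,\tau_h)=0$ and the boundary term in $b_h^N$ vanishes.
\item Integrating by parts elementwise,
\[
 (\diver(\tau-I_h\tau),u_h)=\sum_F\int_F(\tau-I_h\tau)n\cdot\jump{u_h}.
\]
Here $F$ runs over interior faces and faces on $\Gamma_c$; faces on $\Gamma_f$ drop out by the boundary condition.
\item The face term is bounded by $\|h^{-1/2}(\tau-I_h\tau)\|_{0,\mathcal F_h}\,c_h(u_h,u_h)^{1/2}\lesssim\|\tau\|_1\,c_h(u_h,u_h)^{1/2}$. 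This is exactly the role of $c_h$, including $c_h^c$ on $\Gamma_c$.
\item Finally, $\|\tau_h\|_{\Sigma,\iota}\lesssim\|\tau_h\|_1\lesssim\|u_h\|_0$, using $\iota\le1$.
\end{enumerate}
The delicate points are the existence of $\tau$ with the mixed boundary conditions and $\lambda$-robustness. We take the existence from the continuous well-posedness theory cited from \cite{LTLSG}. Because $\tau_h$ also appears in $a_\iota$, we bound $a_\iota(\sigma_h,\tau_h)\le a_\iota(\sigma_h,\sigma_h)^{1/2}\|u_h\|_0$.

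\textbf{Step 4: combination.} Take
\[
 (\tau,v)=(\sigma_h,-u_h)+\delta_1(0,\diver\sigma_h)+\delta_2(\tau_h,0)
\]
with small $\delta_1,\delta_2$. Young's inequality then gives
\[
 B_h^N\gtrsim\|\sigma_h\|_{\Sigma,\iota}^2+\|u_h\|_J^2.
\]
On the other hand, the test pair satisfies $\|\tau\|_{\Sigma,\iota}+\|v\|_J\lesssim\|\sigma_h\|_{\Sigma,\iota}+\|u_h\|_J$, where we need $c_h(\diver\sigma_h,\diver\sigma_h)\lesssim\|\diver\sigma_h\|_0^2$ by discrete trace. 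Dividing the two estimates yields \eqref{eq:infsup-nitsche}.
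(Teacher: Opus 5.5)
Your strategy is the paper's: the diagonal test $(\sigma_h,-u_h)$, then the test $(0,\diver\sigma_h)$ handled through $s_h^N$ and $c_h$, then a Scott--Zhang interpolant of a lifting $\tau$ with $\diver\tau=u_h$ and $\tau n=0$ on $\Gamma_f$ (its interpolation error paid for by $c_h$), and finally a small-weight combination. One step is false as written, namely 3(ii), and 3(v) and Step~4 depend on it.

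\textbf{Why 3(ii) fails.} The Scott--Zhang operator does not preserve $\tau n=0$ on $\Gamma_f$ once $\Gamma_f$ has a corner (2D) or an edge (3D). At such a vertex $z$ the dual face $F_z$ lies on one side only, so only that side's normal condition is inherited. Concretely, take a right-angle corner of $\Gamma_f$ formed by $\{y=0\}$ and $\{x=0\}$, with $\Omega$ locally in $x,y>0$. The field $\tau=\mathrm{diag}(x^2,0)$ satisfies $\tau n=0$ on both sides. With $F_z=[0,h]\times\{0\}$ one gets $(I_h\tau)(0)=\mathrm{diag}(-h^2/6,0)$, so $(I_h\tau)n\neq0$ on the adjacent edge in $\{x=0\}$. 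This has three consequences:
\begin{enumerate}[label=(\alph*)]
\item $s_h^N(\tau_h,\tau_h)\neq0$ in general.
\item Your bound $\|\tau_h\|_{\Sigma,\iota}\lesssim\|\tau_h\|_1$ in 3(v) is unjustified, because the weight $\gamma_f h_F^{-1}$ in $s_h^N$ is not controlled by the $H^1$ norm of a discrete field.
\item The cross term $s_h^N(\sigma_h,\tau_h)$ has been silently dropped from Step~4.
\end{enumerate}

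\textbf{The repair.} This is what the paper does, and it needs no boundary-preserving interpolant.
\begin{enumerate}[label=(\roman*)]
\item Using $\tau n=0$ on $\Gamma_f$, write $b_h^N(\tau_h,u_h)=(\diver\tau,u_h)+(\diver(\tau_h-\tau),u_h)-\langle(\tau_h-\tau)n,u_h\rangle_{\Gamma_f}$. After elementwise integration by parts, the $\Gamma_f$ face contributions cancel against the last term, so 3(iii)--(iv) hold unchanged.
\item By the trace inequality and the local approximation property of $I_h$, $s_h^N(\tau_h,\tau_h)=s_h^N(\tau_h-\tau,\tau_h-\tau)\lesssim\sum_{F\subset\Gamma_f}h_F^{-1}\|\tau_h-\tau\|_{0,F}^2\lesssim\|\tau\|_1^2$. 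This restores $\|\tau_h\|_{\Sigma,\iota}\lesssim\|u_h\|_0$.
\item In Step~4, bound $a_\iota(\sigma_h,\tau_h)+s_h^N(\sigma_h,\tau_h)$ jointly by $\bigl(a_\iota(\sigma_h,\sigma_h)+s_h^N(\sigma_h,\sigma_h)\bigr)^{1/2}\|u_h\|_0$, and absorb it with Young's inequality and Step~1.
\end{enumerate}

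\textbf{Two smaller points.} First, of your two lifting constructions only the extension across $\Gamma_c$ works. Extend $u_h$ by a rigid motion on the added region so that it is $L^2$-orthogonal to $\mathrm{RM}$, apply Lemma~\ref{lem:symmetric-divtau}, and restrict. Taking $\tau=\epsilon(w)$ from a mixed elasticity problem would need $w\in H^2$, which fails near $\overline{\Gamma_c}\cap\overline{\Gamma_f}$. Second, the $\lambda$-uniform bound $a_\iota(\tau,\tau)\gtrsim\|\tau\|_0^2$ you invoke in Step~1 is false, since the trace part of $\mathcal A$ scales like $\lambda^{-1}$. It is harmless only because $\|\cdot\|_{\Sigma,\iota}$ is built from $a_\iota$ itself.
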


\begin{proof}
Let
\[
 \norm{\tau_h}_{a,\iota}^2:=a_\iota(\tau_h,\tau_h),
 \qquad
 |\tau_h|_{\Gamma_f,N}^2:=s_h^N(\tau_h,\tau_h),
 \qquad
 \absj{v_h}^2:=c_h(v_h,v_h).
\]
First, testing with $(\tau_h,v_h)=(\sigma_h,-u_h)$ gives
\begin{equation}\label{eq:basic-test-nitsche}
\begin{aligned}
 B_h^N(\sigma_h,u_h;\sigma_h,-u_h)
 &=
 a_\iota(\sigma_h,\sigma_h)
 +s_h^N(\sigma_h,\sigma_h)
 +c_h(u_h,u_h)  \\
 &=
 \norm{\sigma_h}_{a,\iota}^2
 +|\sigma_h|_{\Gamma_f,N}^2
 +\absj{u_h}^2 .
\end{aligned}
\end{equation}
Since $\Gamma_c\neq\emptyset$,
for every $u_h\in\Uh$ there exists
$\rho\in H^1(\Omega;\Sd)$ such that
\begin{equation}\label{eq:rho-lifting}
 \diver\rho=u_h,\qquad
 \rho n=0\quad\text{on }\Gamma_f,
 \qquad
 \norm{\rho}_{1}\lesssim \norm{u_h}_{0}.
\end{equation}
Then we want to control $\|u_h\|_0$. Let $\rho_h:=I_h\rho\in\Sh$, where $I
_h$ is the Scott-Zhang interpolation operator. By the stability of the interpolation
operator and the fact $\iota\leq 1$, we have

\begin{equation}\label{eq:rhoh-stability}
 \norm{\rho_h}_{\Sigma,\iota}
 \lesssim
 \norm{\rho}_{1}
 \lesssim
 \norm{u_h}_{0}.
\end{equation}
By the definition of $B_h^N(\cdot,\cdot)$,
\begin{align}
 b_h^N(\rho_h,u_h)
 &=
 (\diver\rho_h,u_h)
 -\langle \rho_h n,u_h\rangle_{\Gamma_f}
 \notag\\
 &=
 (\diver\rho,u_h)
 +(\diver(\rho_h-\rho),u_h)
 -\langle (\rho_h-\rho)n,u_h\rangle_{\Gamma_f},
\end{align}
where we used $\rho n=0$ on $\Gamma_f$. Since $u_h$ is piecewise
constant, elementwise integration by parts yields
\begin{equation*}
\begin{aligned}
 b_h^N(\rho_h,u_h)
 &=
 \norm{u_h}_{0}^2
 +\sum_{F\in\Fh^i}
   \int_F ((\rho_h-\rho)n_F)\cdot\jump{u_h}\,\mathrm ds  \\
 &\quad
 +\sum_{F\in\Gamma_c}
   \int_F ((\rho_h-\rho)n)\cdot u_h\,\mathrm ds .
\end{aligned}
\end{equation*}
By the Cauchy--Schwarz inequality and
\eqref{eq:rho-lifting},
\begin{align}
&\left|
\sum_{F\in\Fh^i}
   \int_F ((\rho_h-\rho)n_F)\cdot\jump{u_h}\,\mathrm ds
+\sum_{F\in\Gamma_c}
   \int_F ((\rho_h-\rho)n)\cdot u_h\,\mathrm ds
\right|
\notag\\
&\qquad
\le
\left(
 \sum_{F\in\Fh^i\cup\Gamma_c}
 h_F^{-1}
 \norm{(\rho_h-\rho)n_F}_{0,F}^2
\right)^{1/2}
\absj{u_h}
\notag\\
&\qquad
\lesssim
 \norm{\rho}_{1}\absj{u_h}
\lesssim
 \norm{u_h}_{0}\absj{u_h}.
\end{align}
Therefore, Young's inequality gives
\begin{equation}\label{eq:bhN-lift}
 b_h^N(\rho_h,u_h)
 \ge
 \frac34\norm{u_h}_{0}^2
 -C_J\absj{u_h}^2 .
\end{equation}
Moreover, by \eqref{eq:rhoh-stability},
\begin{align}
 a_\iota(\sigma_h,\rho_h)
 +s_h^N(\sigma_h,\rho_h)
 &\ge
 -\left(
 \norm{\sigma_h}_{a,\iota}
 +|\sigma_h|_{\Gamma_f,N}
 \right)
 \left(
 \norm{\rho_h}_{a,\iota}
 +|\rho_h|_{\Gamma_f,N}
 \right)
 \notag\\
 &\ge
 -\frac14\norm{u_h}_{0}^2
 -C_A\left(
 \norm{\sigma_h}_{a,\iota}^2
 +|\sigma_h|_{\Gamma_f,N}^2
 \right).
\end{align}
Combining this estimate with \eqref{eq:bhN-lift}, we obtain
\begin{equation}\label{eq:lift-test-nitsche}
\begin{aligned}
 B_h^N(\sigma_h,u_h;\rho_h,0)
 &=
 a_\iota(\sigma_h,\rho_h)
 +s_h^N(\sigma_h,\rho_h)
 +b_h^N(\rho_h,u_h)
 \\
 &\ge
 \frac12\norm{u_h}_{0}^2
 -C_A\left(
 \norm{\sigma_h}_{a,\iota}^2
 +|\sigma_h|_{\Gamma_f,N}^2
 \right)
 -C_J\absj{u_h}^2 .
\end{aligned}
\end{equation}
Next we want to control $\norm{\diver\sigma_h}_{0}$. Set
\[
 q_h:=\diver\sigma_h\in\Uh.
\]
Then
\begin{align}
 B_h^N(\sigma_h,u_h;0,q_h)
 &=
 b_h^N(\sigma_h,q_h)-c_h(u_h,q_h)
 \notag\\
 &=
 \norm{\diver\sigma_h}_{0}^2
 -\langle \sigma_h n,q_h\rangle_{\Gamma_f}
 -c_h(u_h,q_h).
\end{align}
The boundary term on $\Gamma_f$ is controlled by $s_h^N(\cdot,\cdot)$:
\begin{align}
 \left|\langle \sigma_h n,q_h\rangle_{\Gamma_f}\right|
 &\le
 \left(
 \sum_{F\subset\Gamma_f}
 \frac{\gamma_f}{h_F}
 \norm{\sigma_h n}_{0,F}^2
 \right)^{1/2}
 \left(
 \sum_{F\subset\Gamma_f}
 \frac{h_F}{\gamma_f}
 \norm{q_h}_{0,F}^2
 \right)^{1/2}
 \notag\\
 &\lesssim
 |\sigma_h|_{\Gamma_f,N}
 \norm{q_h}_{0}.
\end{align}
Here we used the inverse trace inequality. Also, since $q_h\in\Uh$, we have
\[
 \norm{q_h}_J\lesssim \norm{q_h}_0.
\]
Hence
\[
 |c_h(u_h,q_h)|
 \le
 \absj{u_h}\absj{q_h}
 \lesssim
 \absj{u_h}\norm{q_h}_{0}.
\]
Therefore, by Young's inequality,
\begin{equation}\label{eq:div-test-nitsche}
 B_h^N(\sigma_h,u_h;0,q_h)
 \ge
 \frac12\norm{\diver\sigma_h}_{0}^2
 -C_N|\sigma_h|_{\Gamma_f,N}^2
 -C_q\absj{u_h}^2 .
\end{equation}

Choose positive constants $\delta$ and $\eta$, independent of $h$ and
$\iota$, such that
\[
 \delta C_A\le \frac14,
 \qquad
 \delta C_J+\eta C_q\le \frac14,
 \qquad
 \delta C_A+\eta C_N\le \frac14.
\]
Define
\begin{equation}\label{eq:test-pair-nitsche}
 \tau_h^*:=\sigma_h+\delta\rho_h,
 \qquad
 v_h^*:=-u_h+\eta q_h .
\end{equation}
Combining \eqref{eq:basic-test-nitsche},
\eqref{eq:lift-test-nitsche}, and \eqref{eq:div-test-nitsche}, we get
\begin{align}
 B_h^N(\sigma_h,u_h;\tau_h^*,v_h^*)
 &\ge
 \beta\Big(
 \norm{\sigma_h}_{a,\iota}^2
 +|\sigma_h|_{\Gamma_f,N}^2
 +\norm{\diver\sigma_h}_{0}^2
 +\norm{u_h}_{0}^2
 +\absj{u_h}^2
 \Big)
 \notag\\
 &=
 \beta\Big(
 \norm{\sigma_h}_{\Sigma,\iota}^2
 +\norm{u_h}_J^2
 \Big).
\end{align}
Here $\beta>0$ is a constant depending on $\delta,~\eta$. Finally, by \eqref{eq:rhoh-stability} and the inverse estimate
$\norm{q_h}_J\lesssim\norm{q_h}_0$,
\[
 \norm{\tau_h^*}_{\Sigma,\iota}
 +\norm{v_h^*}_J
 \lesssim
 \norm{\sigma_h}_{\Sigma,\iota}
 +\norm{u_h}_J .
\]
Thus, we finish the proof.
\end{proof}

\section{Error estimate}

Throughout this section, the piecewise affine mesh boundary is used as the
boundary representation of the discrete problem, and the boundary data are
evaluated directly on its faces. Any effect of this boundary representation
is incorporated into the formulation and is not separated as an additional
geometric consistency term. Define the strengthened displacement norm
\[
 \|w\|_{J,*}^2
 :=
 \|w\|_0^2+c_h(w,w)
 +\sum_{F\subset\Gamma_f}h_F\|w\|_{0,F}^2.
\]

\begin{lemma}[Consistency and error equation]
\label{lem:consistency-error-equation}
Let $(\sigma,u)$ be a sufficiently regular exact solution satisfying
\[
 \sigma n=g_f\quad\text{on }\Gamma_f,
 \qquad
 u=g_c\quad\text{on }\Gamma_c.
\]
Assume that the boundary data are evaluated exactly in the discrete
right-hand side. Then, for every $(\tau_h,v_h)\in\Sh\times\Uh$,
\begin{equation}\label{eq:consistency}
 B_h^N(\sigma,u;\tau_h,v_h)
 =
 (f,v_h)+\mathcal F_h^N(g_f,g_c,g_\sigma;\tau_h,v_h).
\end{equation}
Consequently,
\begin{equation}\label{eq:galerkin-orthogonality}
 B_h^N(\sigma-\sigma_h,u-u_h;\tau_h,v_h)=0
 \qquad
 \forall(\tau_h,v_h)\in\Sh\times\Uh.
\end{equation}
\end{lemma}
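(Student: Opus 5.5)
The consistency identity \eqref{eq:consistency} follows by inserting the exact solution into $B_h^N$, testing separately with $(\tau_h,0)$ and $(0,v_h)$, and integrating by parts against the strong equations \eqref{lsg-strong}. Once \eqref{eq:consistency} holds, the discrete problem \eqref{lsg-nitsche} has the same right-hand side. Subtracting gives the Galerkin orthogonality \eqref{eq:galerkin-orthogonality} by bilinearity. All the work is therefore in \eqref{eq:consistency}. Throughout we assume $\sigma\in H^2$, $u\in H^1$, and that $u$ is continuous so that $\jump{u}=0$ on interior faces.

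\textbf{Step 1 (stress test function).} Take $v_h=0$. The terms are $a_\iota(\sigma,\tau_h)+b_h^N(\tau_h,u)+s_h^N(\sigma,\tau_h)$.

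First integrate $\iota^2(\nabla\mathcal A\sigma,\nabla\tau_h)$ by parts. Since $\tau_h\in H^1$, no interior face terms appear, and this gives
\[
-\iota^2(\mathcal A\Delta\sigma,\tau_h)+\iota^2\langle\mathcal A\partial_n\sigma,\tau_h\rangle_{\partial\Omega}.
\]
With $\partial_n\sigma=g_\sigma$, the boundary part is exactly $\iota^2\langle\mathcal A g_\sigma,\tau_h\rangle_{\partial\Omega}$ in $\mathcal F_h^N$.

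Next, by symmetry of $\tau_h$,
\[
(\diver\tau_h,u)=-(\tau_h,\epsilon(u))+\langle\tau_h n,u\rangle_{\partial\Omega}.
\]
The $\Gamma_f$ part of this boundary term cancels the $-\langle\tau_h n,u\rangle_{\Gamma_f}$ in $b_h^N$. The $\Gamma_c$ part becomes $\langle g_c,\tau_h n\rangle_{\Gamma_c}$.

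The volume terms combine to $(\mathcal A(\sigma-\iota^2\Delta\sigma)-\epsilon(u),\tau_h)=0$ by the constitutive law. Finally, $s_h^N(\sigma,\tau_h)=s_h^N(g_f,\tau_h)$ because $\sigma n=g_f$ on $\Gamma_f$. Together these reproduce every $\tau_h$-dependent term of $\mathcal F_h^N$.

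\textbf{Step 2 (displacement test function).} Take $\tau_h=0$. We need
\[
b_h^N(\sigma,v_h)-c_h(u,v_h)=(\diver\sigma,v_h)-\langle\sigma n,v_h\rangle_{\Gamma_f}-c_h^i(u,v_h)-c_h^c(u,v_h).
\]
The first term equals $(f,v_h)$. The second equals $-\langle g_f,v_h\rangle_{\Gamma_f}$. Continuity of $u$ gives $c_h^i(u,v_h)=0$, and $u=g_c$ on $\Gamma_c$ gives $c_h^c(u,v_h)=c_h^c(g_c,v_h)$. These match the remaining entries of $\mathcal F_h^N$.

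\textbf{Main obstacle: bookkeeping with the stated system.} The main difficulty is reconciling signs and terms between $B_h^N$, the system \eqref{lsg-nitsche}, and $\mathcal F_h^N$. The second line of \eqref{lsg-nitsche} contains an extra $s_h^N(u_h,v_h)$, which is not in $B_h^N$. Also, $\mathcal F_h^N$ uses $\mathcal A g_\sigma$ while \eqref{lsg-weak} uses $\mathcal A^{-1}g_\sigma$.

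I will work with the bilinear form $B_h^N$ as defined, which is the one used in Lemma~\ref{lem:infsup-nitsche}. For the $g_\sigma$ term I will interpret the data so that $\iota^2\mathcal A\partial_n\sigma$ matches, reading $g_\sigma$ as the boundary datum appearing in $\mathcal F_h^N$. The remaining care is to check the orientation conventions: outward $n$ on $\partial\Omega$, and $\jump{\cdot}$ on boundary faces. With those fixed, each term cancels as described.
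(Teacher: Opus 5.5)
Your proof is correct and follows essentially the same route as the paper. You integrate $a_\iota(\sigma,\tau_h)+(\diver\tau_h,u)$ by parts, kill the volume terms with the constitutive law, and use $\partial_n\sigma=g_\sigma$, $u=g_c$ on $\Gamma_c$, $\sigma n=g_f$ on $\Gamma_f$ and $c_h^i(u,v_h)=0$; you then subtract the discrete problem written in the $B_h^N$ form. Your resolution of the notational inconsistencies is exactly what the paper's proof implicitly does: you use $B_h^N$ rather than the literal second line of \eqref{lsg-nitsche}, and you read the boundary term as $\iota^2\langle\mathcal A g_\sigma,\tau_h\rangle_{\partial\Omega}$ with $g_\sigma=\partial_n\sigma$.
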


\begin{proof}
Since a general $\tau_h\in\Sh$ does not satisfy
$\tau_h n=0$ on $\Gamma_f$. Integration by parts gives
\begin{align}
 a_\iota(\sigma,\tau_h)+(\diver\tau_h,u)
 &= \langle u,\tau_h n\rangle_{\partial\Omega}
 +\iota^2\langle\mathcal Ag_\sigma,\tau_h\rangle_{\partial\Omega}.
\label{eq:strong-consistency}
\end{align}
Using the definition of $b_h^N$ and $u=g_c$ on $\Gamma_c$, we obtain
\begin{equation}\label{eq:stress-consistency}
 a_\iota(\sigma,\tau_h)+b_h^N(\tau_h,u)
 =
 \langle g_c,\tau_h n\rangle_{\Gamma_c}
 +\iota^2\langle\mathcal Ag_\sigma,\tau_h\rangle_{\partial\Omega}.
\end{equation}
Moreover, $\sigma n=g_f$ on $\Gamma_f$ implies
\begin{equation}\label{eq:penalty-consistency}
 s_h^N(\sigma,\tau_h)=s_h^N(g_f,\tau_h).
\end{equation}
For the equilibrium equation,
\begin{equation}\label{eq:equilibrium-consistency}
 b_h^N(\sigma,v_h)
 =
 (\diver\sigma,v_h)-\langle\sigma n,v_h\rangle_{\Gamma_f}
 =
 (f,v_h)-\langle g_f,v_h\rangle_{\Gamma_f}.
\end{equation}
Since $u$ is single-valued across interior faces and $u=g_c$ on
$\Gamma_c$,
\begin{equation}\label{eq:stabilization-consistency}
 c_h^i(u,v_h)=0,
 \qquad
 c_h(u,v_h)=c_h^c(g_c,v_h).
\end{equation}
Combining \eqref{eq:stress-consistency}--\eqref{eq:stabilization-consistency}
yields \eqref{eq:consistency}. Subtracting the discrete problem
\eqref{lsg-nitsche} gives \eqref{eq:galerkin-orthogonality}.
\end{proof}

\begin{theorem}[Quasi-optimal error estimate]
\label{thm:error-estimate-nitsche}
Let $(\sigma,u)$ be the exact solution and let
$(\sigma_h,u_h)\in\Sh\times\Uh$ solve \eqref{lsg-nitsche}. Then
\begin{equation}\label{eq:quasi-optimal}
 \|\sigma-\sigma_h\|_{\Sigma,\iota}
 +\|u-u_h\|_J
 \lesssim
 \inf_{\tau_h\in\Sh}\|\sigma-\tau_h\|_{\Sigma,\iota}
 +\inf_{v_h\in\Uh}\|u-v_h\|_{J,*}.
\end{equation}
The hidden constant is independent of $h$ and $\iota$.
\end{theorem}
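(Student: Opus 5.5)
The plan is the standard Strang-type argument: combine the discrete inf--sup condition of Lemma~\ref{lem:infsup-nitsche} with the Galerkin orthogonality \eqref{eq:galerkin-orthogonality} of Lemma~\ref{lem:consistency-error-equation} and a boundedness estimate for $B_h^N$.

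Fix arbitrary $\tau_h\in\Sh$, $v_h\in\Uh$ and split the error as
\[
\sigma-\sigma_h=(\sigma-\tau_h)+(\tau_h-\sigma_h),\qquad u-u_h=(u-v_h)+(v_h-u_h).
\]
By the triangle inequality, and since $\|\cdot\|_J\le\|\cdot\|_{J,*}$, it suffices to bound the discrete parts $\xi_h:=\tau_h-\sigma_h$ and $\zeta_h:=v_h-u_h$. Apply \eqref{eq:infsup-nitsche} to $(\xi_h,\zeta_h)$. For any test pair $(w_h,y_h)$, orthogonality gives
\[
B_h^N(\xi_h,\zeta_h;w_h,y_h)=-B_h^N(\sigma-\tau_h,u-v_h;w_h,y_h).
\]
It therefore remains to prove the continuity bound
\[
|B_h^N(\eta,\chi;w_h,y_h)|\lesssim \bigl(\|\eta\|_{\Sigma,\iota}+\|\chi\|_{J,*}\bigr)\bigl(\|w_h\|_{\Sigma,\iota}+\|y_h\|_J\bigr),
\]
where $\eta$ is continuous with $\eta\in H^1$ and the Nitsche trace terms make sense, $\chi$ is piecewise $H^1$, and the constant is independent of $h$ and $\iota$.

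We check the terms of $B_h^N$ one at a time.
\begin{itemize}
\item $a_\iota$ and $s_h^N$ are bounded by Cauchy--Schwarz in their own seminorms. Both seminorms are contained in $\|\cdot\|_{\Sigma,\iota}$.
\item $c_h(\chi,y_h)\le |\chi|_J|y_h|_J$.
\item $b_h^N(\eta,y_h)=(\diver\eta,y_h)-\langle\eta n,y_h\rangle_{\Gamma_f}$. The volume part is bounded by $\|\diver\eta\|_0\|y_h\|_0$. For the boundary part, write it as $(h_F^{-1/2}\eta n)(h_F^{1/2}y_h)$. The first factor is controlled by $s_h^N(\eta,\eta)^{1/2}$. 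The second uses the discrete trace inequality $h_F\|y_h\|_{0,F}^2\lesssim\|y_h\|_{0,T}^2$, valid because $y_h$ is piecewise polynomial.
\item $b_h^N(w_h,\chi)=(\diver w_h,\chi)-\langle w_hn,\chi\rangle_{\Gamma_f}$. The volume part is bounded by $\|\diver w_h\|_0\|\chi\|_0$. The boundary part is bounded by $s_h^N(w_h,w_h)^{1/2}\bigl(\sum_{F\subset\Gamma_f}h_F\|\chi\|_{0,F}^2\bigr)^{1/2}$. This is exactly why the extra face term is included in $\|\cdot\|_{J,*}$.
\end{itemize}

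The one delicate point is that $b_h^N(w_h,\chi)$ must be bounded without integrating by parts, since that would put $h^{-1}$ jumps on $\chi$. Keeping the volume form $(\diver w_h,\chi)$ avoids this. Since $\|\diver w_h\|_0$ is part of $\|w_h\|_{\Sigma,\iota}$, this step is harmless.

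Dividing by $\|w_h\|_{\Sigma,\iota}+\|y_h\|_J$ and taking the supremum gives
\[
\|\xi_h\|_{\Sigma,\iota}+\|\zeta_h\|_J\lesssim\|\sigma-\tau_h\|_{\Sigma,\iota}+\|u-v_h\|_{J,*}.
\]
Adding back the interpolation parts and taking the infimum over $\tau_h$ and $v_h$ yields \eqref{eq:quasi-optimal}. The constants come only from shape regularity, from $\gamma_f$, and from the inf--sup constant of Lemma~\ref{lem:infsup-nitsche}, so they are independent of $h$ and $\iota$. Here $\iota$ only enters through $a_\iota$, which is handled by Cauchy--Schwarz in its own energy norm.
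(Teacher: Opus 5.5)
Your proposal is correct and follows essentially the same argument as the paper: apply the discrete inf--sup condition of Lemma~\ref{lem:infsup-nitsche} to the discrete error, use Galerkin orthogonality to replace it by the approximation error $(\sigma-\tau_h,u-v_h)$, bound $B_h^N$ by continuity (the extra $\Gamma_f$ face term in $\|\cdot\|_{J,*}$ absorbs $\langle \eta_h n,u-v_h\rangle_{\Gamma_f}$), and finish with the triangle inequality. Your term-by-term continuity check, including keeping $(\diver w_h,\chi)$ in volume form and using the discrete trace inequality for piecewise constants, simply spells out what the paper compresses into a single Cauchy--Schwarz step.
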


\begin{proof}
Let $\tau_h\in\Sh$ and $v_h\in\Uh$ be arbitrary, and set
\[
 \theta_h:=\sigma_h-\tau_h,
 \qquad
 \xi_h:=u_h-v_h.
\]
By the discrete inf--sup condition,
\begin{align}
 \|\theta_h\|_{\Sigma,\iota}+\|\xi_h\|_J
 &\lesssim
 \sup_{(\eta_h,w_h)\in\Sh\times\Uh\setminus\{(0,0)\}}
 \frac{B_h^N(\theta_h,\xi_h;\eta_h,w_h)}
 {\|\eta_h\|_{\Sigma,\iota}+\|w_h\|_J}.
 \label{eq:infsup-error-start}
\end{align}
Since
\[
 \theta_h=(\sigma-\tau_h)-(\sigma-\sigma_h),
 \qquad
 \xi_h=(u-v_h)-(u-u_h),
\]
Galerkin orthogonality implies
\begin{equation}\label{eq:error-decomposition}
 B_h^N(\theta_h,\xi_h;\eta_h,w_h)
 =
 B_h^N(\sigma-\tau_h,u-v_h;\eta_h,w_h).
\end{equation}
Let
\[
 E_\sigma:=\sigma-\tau_h,
 \qquad
 E_u:=u-v_h.
\]
By the Cauchy--Schwarz inequality, the inverse trace inequality, and the
definitions of the norms,
\begin{equation}\label{eq:continuity-error}
 \left|B_h^N(E_\sigma,E_u;\eta_h,w_h)\right|
 \lesssim
 \left(\|E_\sigma\|_{\Sigma,\iota}+\|E_u\|_{J,*}\right)
 \left(\|\eta_h\|_{\Sigma,\iota}+\|w_h\|_J\right).
\end{equation}
In particular, the two Nitsche boundary terms are controlled by
\[
 \left|\langle E_\sigma n,w_h\rangle_{\Gamma_f}\right|
 \lesssim
 \|E_\sigma\|_{\Sigma,\iota}\|w_h\|_J,
 \qquad
 \left|\langle\eta_h n,E_u\rangle_{\Gamma_f}\right|
 \lesssim
 \|\eta_h\|_{\Sigma,\iota}\|E_u\|_{J,*}.
\]
Therefore,
\begin{equation}\label{eq:discrete-error-bound}
 \|\theta_h\|_{\Sigma,\iota}+\|\xi_h\|_J
 \lesssim
 \|\sigma-\tau_h\|_{\Sigma,\iota}
 +\|u-v_h\|_{J,*}.
\end{equation}
Finally, the triangle inequality implies the
proof.

\end{proof}

\begin{corollary}
\label{cor:first-order}
Assume that
\[
 \sigma\in H^2(\Omega;\Sd),
 \qquad
 u\in H^1(\Omega;\R^d).
\]
Let $I_h\sigma\in\Sh$ be the Scott--Zhang interpolant and let
$P_h^0u\in\Uh$ be the elementwise $L^2$-projection. Then
\[
 \|\sigma-I_h\sigma\|_{\Sigma,\iota}
 \lesssim h\|\sigma\|_{2,\Omega},
 \qquad
 \|u-P_h^0u\|_{J,*}
 \lesssim h|u|_{1,\Omega}.
\]
Consequently,
\begin{equation}\label{eq:first-order-error}
 \|\sigma-\sigma_h\|_{\Sigma,\iota}
 +\|u-u_h\|_J
 \lesssim
 h\left(\|\sigma\|_{2,\Omega}+|u|_{1,\Omega}\right).
\end{equation}
\end{corollary}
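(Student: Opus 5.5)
The corollary follows from Theorem~\ref{thm:error-estimate-nitsche}: once the two interpolation bounds are shown, insert $\tau_h=I_h\sigma$ and $v_h=P_h^0u$ into the right-hand side of \eqref{eq:quasi-optimal}. So the plan is to prove the two approximation estimates term by term. The constants must stay independent of $\iota\in(0,1]$; this holds because every $\iota$-weighted term enters with a factor $\iota^2\le 1$.

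\emph{Stress interpolation.} Set $e:=\sigma-I_h\sigma$. Since $\mathcal A$ is bounded uniformly in $\lambda\ge0$ (its trace coefficient is at most $1/(2\mu d)$), we have
\[
a_\iota(e,e)\lesssim \iota^2|e|_1^2+\|e\|_0^2 \lesssim h^2|\sigma|_2^2 .
\]
This uses the standard Scott--Zhang estimates $\|e\|_{0,T}+h_T|e|_{1,T}\lesssim h_T^2|\sigma|_{2,\omega_T}$ summed over elements with finite patch overlap. The divergence term is bounded by $\|\diver e\|_0\lesssim|e|_1\lesssim h|\sigma|_2$. For the Nitsche term, on each face $F\subset\Gamma_f$ with adjacent element $T$ we apply the scaled trace inequality
\[
h_F^{-1}\|e\|_{0,F}^2\lesssim h_T^{-2}\|e\|_{0,T}^2+|e|_{1,T}^2\lesssim h_T^2|\sigma|_{2,\omega_T}^2 .
\]
Summing with $\gamma_f$ fixed gives $s_h^N(e,e)\lesssim h^2\|\sigma\|_2^2$.

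\emph{Displacement projection.} Set $w:=u-P_h^0u$. The volume term satisfies $\|w\|_0\lesssim h|u|_1$ by the Poincaré inequality on each element.

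For an interior face $F=T^+\cap T^-$, the exact $u$ is continuous across $F$, so $\jump{w}=-\jump{P_h^0u}$. Hence
\[
h_F\|\jump{w}\|_{0,F}^2\lesssim \sum_\pm h_F\|u-P_h^0u\|_{0,F\cap T^\pm}^2 .
\]
The trace inequality $h_F\|w\|_{0,F}^2\lesssim \|w\|_{0,T}^2+h_T^2|w|_{1,T}^2\lesssim h_T^2|u|_{1,T}^2$ then bounds each term. The same elementwise trace bound handles the faces in $\Gamma_c$ (the term $h_F\|w\|_{0,F}^2$ in $c_h^c$) and the extra $\Gamma_f$ term of $\|\cdot\|_{J,*}$. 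Summing gives $\|w\|_{J,*}\lesssim h|u|_1$.

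There is no real obstacle. The only points requiring care are that the constants do not depend on $\lambda$ (through the uniform bound on $\mathcal A$) or on $\iota$ (through $\iota\le1$), and that the face terms use the $H^1$ trace inequality rather than an inverse estimate, since $\sigma$ and $u$ are not discrete. Combining the two estimates with \eqref{eq:quasi-optimal} yields \eqref{eq:first-order-error}.
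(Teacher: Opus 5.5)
Your proposal is correct and follows the argument the paper leaves implicit. The paper states the corollary without a separate proof, as an immediate consequence of Theorem~\ref{thm:error-estimate-nitsche}. You insert the Scott--Zhang interpolant and the elementwise $L^2$-projection into \eqref{eq:quasi-optimal}, bounding the face terms with the scaled $H^1$ trace inequality and using the $\lambda$-uniform boundedness of $\mathcal A$ together with $\iota\le 1$.
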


\begin{remark}[Boundary representation]
The estimate in Theorem \ref{thm:error-estimate-nitsche} contains no separate geometric consistency term. If the implementation replaces $g_f$, $g_c$, or $g_\sigma$ by additional projections or numerical approximations, the corresponding boundary approximation must be added to the error equation.
\end{remark}

\section{Numerical experiments}
In this section, we present several numerical experiments to verify the
accuracy and robustness of the proposed lowest order numerical scheme. We also investigate
the differences between the linear stress gradient elasticity model and the
classical linear elasticity model in numerical simulations. 
\subsection{Verification}
By \eqref{lsg-strong}, if an exact displacement \(u\) is prescribed, then the corresponding stress \(\sigma\) can only be obtained by solving a Laplace-type equation. This makes it difficult to construct an analytic solution. For this reason, we take an exact solution \(u_0\) of problem \eqref{eq:iota0} as
\[
u_{0}
=
\operatorname{curl}\bigl(\Pi_{i=1}^d\sin^4(\pi x_i)\bigr)
+
\frac{1}{\lambda+1}\left(\Pi_{i=1}^d
x_i^4(1-x_i)^4\right)(\sum_{i=1}^d\mathbf{e}_i).\\
\]
Note that \(u_0\in H_0^2([0,1]^d;\mathbb{R}^d)\). Therefore, for any \(\tau\) satisfying \(\operatorname{div}\tau=0\), we have
\[
(\nabla\mathcal{A}\sigma(u_0),\nabla\tau)
=
(\nabla\operatorname{sym}\operatorname{grad}u_0,\nabla\tau)
=
(u_0,\operatorname{div}\Delta \tau)
=
0.
\]
Hence,
\[
\sigma_{\iota}
=
\sigma_0
:=
\mathcal{A}^{-1}\operatorname{sym}\operatorname{grad}u_0 .
\]
Then 
$
\epsilon(u_{\iota}) = \mathcal{A}\sigma_0 -\iota^2\triangle\mathcal{A}\sigma_0,
$
which implies $\epsilon(u_{\iota})=\epsilon(u_0-\iota^2\triangle u_0)$. And $u_0\in H_0^3(\Omega)$ implies $u_{\iota}= u_0-\iota^2\triangle u_0.$ The parameters setting is $\lambda=1.0e+04,~\mu=0.3,~\iota=0.5,10^{-1},10^{-2},10^{-3}.$ 

\begin{table}[htbp]
\centering
\caption{Numerical experiments for the two-dimensional test problem}
\label{tab:2d-test}
\begin{tabular}{cccccccc}
\hline
hsize & 1/2 & 1/4 & 1/8 & 1/16 & 1/32 & 1/64 & 1/128 \\
\hline
&&&$\iota=0.5$&&&&\\
\hline
$\|E_{\sigma}\|_{\iota}$   
& 6.79e+01 & 6.11e+01 & 4.59e+01 & 2.45e+01 & 1.08e+01 & 4.79e+00 & 2.28e+00 \\
order
& -- & 0.15 & 0.41 & 0.91 & 1.18 & 1.17 & 1.07 \\
$\|E_u\|_{L^2}$
& 4.76e+01 & 4.06e+01 & 2.98e+01 & 1.52e+01 & 6.26e+00 & 2.60e+00 & 1.19e+00 \\
order
& -- & 0.23 & 0.44 & 0.97 & 1.28 & 1.27 & 1.13 \\
\hline
&&&$\iota=0.1$&&&&\\
\hline
$\|E_{\sigma}\|_{\iota}$   
& 5.38e+01 & 3.97e+01 & 2.64e+01 & 1.42e+01 & 7.26e+00 & 3.66e+00 & 1.83e+00 \\
order
& -- & 0.44 & 0.59 & 0.89 & 0.97 & 0.99 & 1.00 \\
$\|E_u\|_{L^2}$
& 3.05e+00 & 2.72e+00 & 1.70e+00 & 8.54e-01 & 4.26e-01 & 2.13e-01 & 1.06e-01 \\
order
& -- & 0.16 & 0.68 & 0.99 & 1.00 & 1.00 & 1.00 \\
\hline
&&&$\iota=1.0e{-2}$&&&&\\
\hline
$\|E_{\sigma}\|_{\iota}$   
& 5.28e+01 & 3.88e+01 & 2.58e+01 & 1.39e+01 & 7.14e+00 & 3.61e+00 & 1.81e+00 \\
order
& -- & 0.44 & 0.59 & 0.89 & 0.96 & 0.98 & 0.99 \\
$\|E_u\|_{L^2}$
& 1.41e+00 & 2.65e+00 & 1.40e+00 & 6.66e-01 & 3.33e-01 & 1.67e-01 & 8.40e-02 \\
order
& -- & -0.91 & 0.92 & 1.07 & 1.00 & 0.99 & 1.00 \\
\hline
&&&$\iota=1.0e{-4}$&&&&\\
\hline
$\|E_{\sigma}\|_{\iota}$   
& 5.28e+01 & 3.88e+01 & 2.58e+01 & 1.39e+01 & 7.13e+00 & 3.61e+00 & 1.81e+00 \\
order
& -- & 0.44 & 0.59 & 0.89 & 0.96 & 0.98 & 0.99 \\
$\|E_u\|_{L^2}$
& 1.40e+00 & 2.65e+00 & 1.40e+00 & 6.64e-01 & 3.32e-01 & 1.67e-01 & 8.37e-02 \\
order
& -- & -0.92 & 0.92 & 1.07 & 1.00 & 0.99 & 0.99 \\
\hline
\end{tabular}
\end{table}

\begin{table}[htbp]
\centering
\caption{Numerical experiments for the three-dimensional verification}
\label{tab:3d-test}
\begin{tabular}{ccccc}
\hline
hsize & 1/2 & 1/4 & 1/8 & 1/16 \\
\hline
&&$\iota=0.5$&&\\
\hline
$\|E_{\sigma}\|_{\iota}$   
& 7.35e+01 & 7.01e+01 & 5.18e+01 & 2.72e+01\\
order
& -- & 0.07 & 0.44 & 0.93  \\
$\|E_{u}\|_{L^2}$   
& 5.01e+01 & 4.60e+01 & 3.30e+01 & 1.65e+01\\
order 
& -- & 0.13 & 0.48 & 1.00  \\
\hline
&&$\iota=0.1$&&\\
\hline
$\|E_{\sigma}\|_{\iota}$   
& 5.77e+01 & 4.51e+01 & 2.72e+01 & 1.44e+01\\
order
& -- & 0.36 & 0.73 & 0.92  \\
$\|E_{u}\|_{L^2}$   
& 2.93e+00 & 2.35e+00 & 1.45e+00 & 7.21e-01\\
order 
& -- & 0.32 & 0.70 & 1.01  \\
\hline
&&$\iota=1.0e-02$&&\\
\hline
$\|E_{\sigma}\|_{\iota}$   
& 5.66e+01 & 4.42e+01 & 2.66e+01 & 1.41e+01\\
order
& -- & 0.36 & 0.73 & 0.92  \\
$\|E_{u}\|_{L^2}$   
& 1.23e+00 & 2.29e+00 & 1.16e+00 & 5.42e-01\\
order 
& -- & -0.90 & 0.99 & 1.09  \\
\hline
&&$\iota=1.0e-04$&&\\
\hline
$\|E_{\sigma}\|_{\iota}$   
& 5.66e+01 & 4.42e+01 & 2.66e+01 & 1.41e+01\\
order
& -- & 0.36 & 0.73 & 0.92  \\
$\|E_{u}\|_{L^2}$   
& 1.22e+00 & 2.30e+00 & 1.15e+00 & 5.40e-01\\
order 
& -- & -0.91 & 0.99 & 1.10  \\
\hline
\end{tabular}
\end{table}
Tab. \ref{tab:2d-test} and \ref{tab:3d-test} show that the convergence rate  almost equal to $1$ for both $\sigma$ and $u$. This implies the scheme is robust with $\lambda$ and $\iota$. 

\subsection{Plate with a rounded U-shaped notch}
In this section, we consider a plate with a U-notched hole, see Fig. \ref{fig:U-notched}.
The rounded notch induces a pronounced geometric stress concentration
near its upper root. The hole extends a distance $r$ along the $x$-axis and
$2r$ along the $y$-axis. 
The radius $r$ characterizes the size of the hole, and is therefore used as the
reference length for defining the dimensionless internal length
$\iota/r$.
\begin{figure}[htbp]
  \centering
  \includegraphics[width=0.45\linewidth]{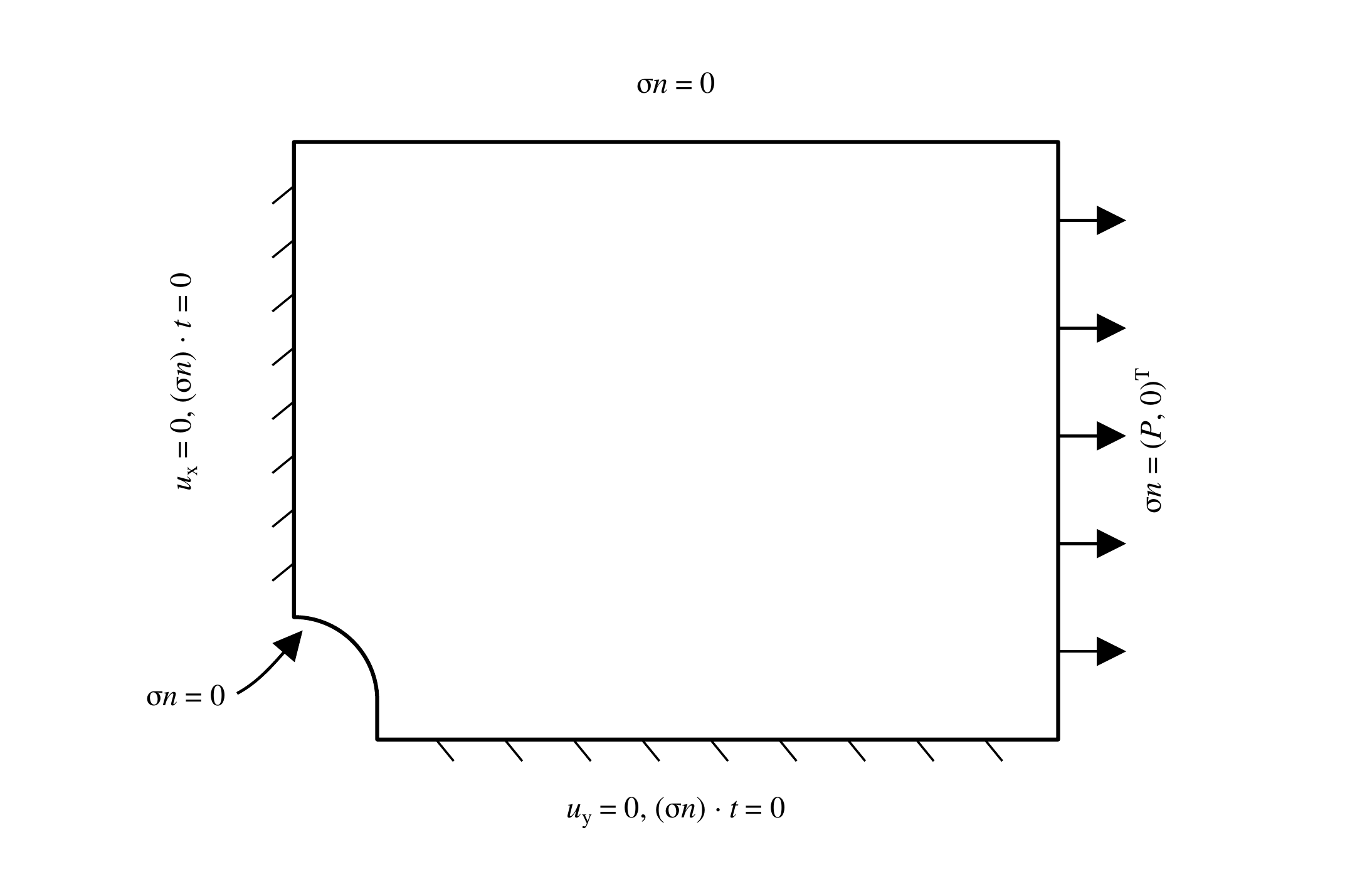}
  \includegraphics[width=0.50\linewidth]{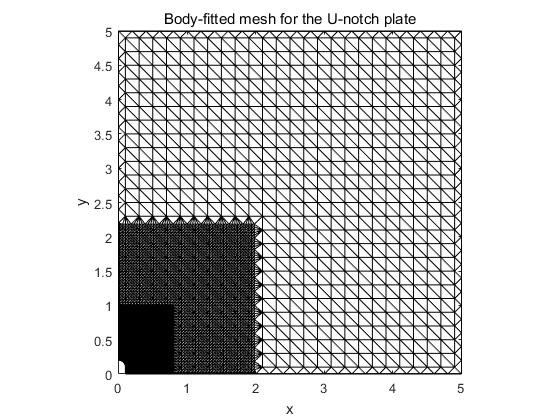}
  \caption{Configure of U-notched hole(left) and mesh(right).}
  \label{fig:U-notched}
\end{figure}
Owing to symmetry, only one quarter of the plate is discretized.
the notch boundary in this experiment is given by\[
\Gamma_{\mathrm{notch}}
=
\bigl\{(r,y)\mid 0\leq y\leq r\bigr\}
\cup
\bigl\{(x,y)\in\overline{\Omega}\mid
x^2+(y-r)^2=r^2,\ r\leq y\leq 2r\bigr\}.
\]
The upper notch root is located at
\[
a_0=(0,2r).
\]
We set $L=H=5$, $r=0.1$, $E=1000$, and $\nu=0.3$, and prescribe a uniform tensile traction $P=1000$ on the right boundary. The upper boundary and the notch boundary are traction free, while the standard symmetry conditions are imposed on $x=0$ and $y=0$. No body force is applied. And set
$
\partial_n\sigma=0
\text{ on }\partial\Omega.
$
The problem is solved using the Nitsche-stabilized
method \eqref{lsg-nitsche} on a body-fitted graded mesh, see Fig. \ref{fig:U-notched} , with
\[
h_{\rm near}=\frac{r}{24},
\qquad
h_{\rm far}=0.1.
\]
The geometry and the mesh are kept fixed, whereas the dimensionless internal length is varied over
\[
\frac{\iota}{r}
\in
\{0,\,0.01,\,0.05,\,0.1,\,0.5,\,1,\,5,\,10\},
\]
where $\iota/r=0$ corresponds to the classical linear elasticity model. 
Define
\[
K^{a}_t:=\frac{\sigma_{xx}(a)}{P}.
\]
Specially, $K^{\mathrm{root}}_t$ represent the factors on root point $a_0$. And define 
\[
K_t^{\mathrm{local}}:=\max_{|a-a_0|<2r}\{K_t^{a}\}.
\]
Fig. \ref{fig:Kt} shows the normalized tensile-stress profile ahead of the notch, the stress-concentration factors evaluated at the notch root and within a neighborhood of the root. Fig. \ref{fig:VM_distribution} gives the stress distribution when $\iota/r=0$ and $\iota/r=10.$ 
\begin{figure}[htbp]
    \centering
    \includegraphics[width=0.4\linewidth]{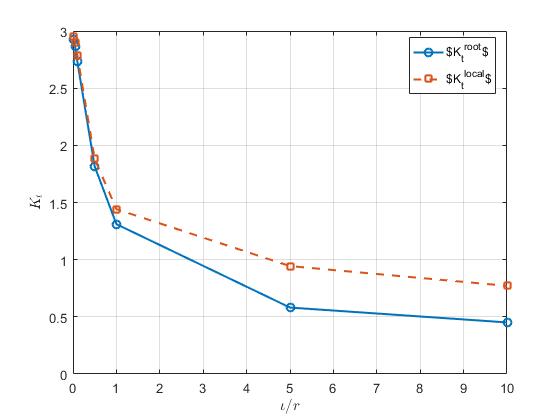}
    \includegraphics[width=0.4\linewidth]{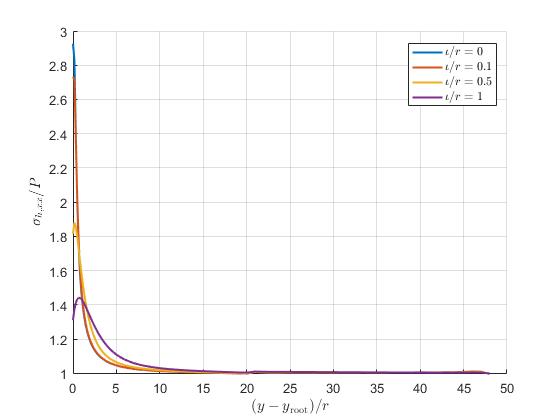}
    \caption{Stress-concentration factors $K_t^{\rm root}$ and
$K_t^{\rm local}$ (left), and normalized tensile-stress profiles along
the line $x=0$ (right), for varying $\iota/r$.}
    \label{fig:Kt}
\end{figure}
\begin{figure}[htbp]
    \centering
    \includegraphics[width=0.45\linewidth]{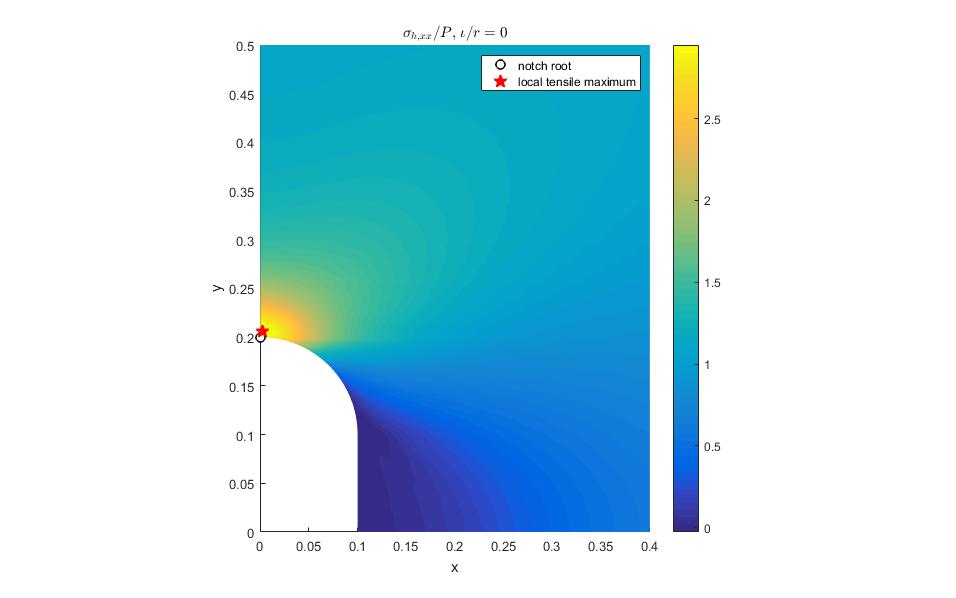}
    \includegraphics[width=0.45\linewidth]{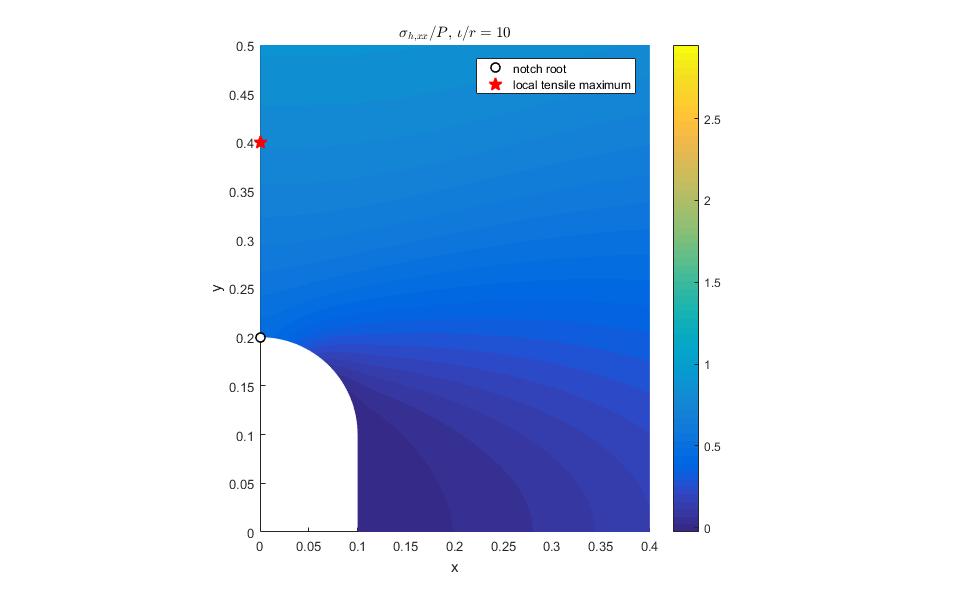}
    \caption{ Stress distribution for $\iota/r=0$(left) and $\iota/r=10$(right).}
    \label{fig:VM_distribution}
\end{figure}

In the classical-elasticity reference computation $\iota/r=0$, the
root value and the local maximum are approximately
$K_t^{\rm root}=2.93$ and $K_t^{\rm local}=2.95$, respectively.
Numerically shows that the local maximum point is almost the root point. For
$\iota/r\ll1$, the stress field remains close to its classical
counterpart. Size effects become pronounced when the internal length
is comparable to the characteristic notch size, as illustrated by
the cases $\iota/r=0.5$ and $1$. The stress-gradient regularization
smooths the stress field and substantially reduces the local peak.
For $\iota/r=1$, the computed peak is attained slightly ahead of the
geometric root. When $\iota/r=5$ and $10$, the internal length is much
larger than the notch-root radius, and the maximum axial stress within
the selected notch neighborhood falls below the applied nominal
traction.

\bibliographystyle{plain}
\bibliography{ref}
\end{document}